\documentclass{article}
\usepackage[utf8]{inputenc}
\usepackage{amsfonts}
\usepackage{amsmath,amsthm}
\usepackage{amscd}
\usepackage[all,cmtip]{xy}
\usepackage{amssymb}
\usepackage{indentfirst}
\usepackage{geometry}
\usepackage{fancyhdr}
\usepackage{lipsum}
\usepackage{mathtools}
\usepackage{relsize}
\usepackage{physics}
\usepackage{braket}
\usepackage{bm}
\usepackage{esint}
\usepackage{enumitem}
\usepackage{appendix}
\usepackage{mathrsfs}
\usepackage{xcolor}
\usepackage{bbold}
\usepackage{comment}
\usepackage[
backend=biber,
style=alphabetic,
maxbibnames=99
]{biblatex} %Imports biblatex package
\allowdisplaybreaks

\newtheorem{theorem}{Theorem}

\newtheorem{assumption}[theorem]{Assumption}

\newtheorem{conjecture}[theorem]{Conjecture}
\newtheorem{corollary}[theorem]{Corollary}

\newtheorem{definition}[theorem]{Definition}

\newtheorem{proposition}[theorem]{Proposition}
\newtheorem{remark}[theorem]{Remark}

\numberwithin{theorem}{section}

\def\text{\mbox}

\def\bi{\begin{itemize}}
\def\ei{\end{itemize}}
\def\bem{\begin{emuerate}}
\def\eem{\end{enumerate}}
\def\beq{\begin{eqnarray*}}
\def\eeq{\end{eqnarray*}}

\def\NN{\mathbb{N}} %naturals
\def\RR{\mathbb{R}} %reals
\def\PP{\mathbb{P}} %probability
\def\EE{\mathbb{E}} %expectation
\def\TT{\mathbb{T}} %Torus
\def\HH{\mathbb{H}} %Hyperbolic Space or Quaternions

\def\mc{\mathcal}

\def\1{\mathbb{1}}

\newcommand*\Laplace{\mathop{}\!\mathbin\bigtriangleup}%Laplacian Operator
\title{Martin boundary and invariant fields for multiplicative SHE}
\author{Hongyi Chen\footnote{Department of Mathematics, Aarhus University, Denmark. Email: hchen77@math.au.dk}}

\begin{document}
\maketitle
\begin{abstract}
    \noindent We study invariant measures of nonlinear multiplicative stochastic heat equations in the weak disorder regime. Under a natural second-moment condition, we show that positive invariant measures are in one-to-one correspondence with bounded positive harmonic functions of the underlying space. This implies that the space of invariant measures inherits the structure of the Martin boundary. We also show that whenever the deterministic semigroup flow converges to a bounded harmonic function, the stochastic evolution converges to the corresponding invariant measure. The results apply to many settings with nontrivial Martin boundary, such as negatively curved manifolds and trees.
\end{abstract}
\tableofcontents
\section{Introduction}\label{sec:intro}
Let $(M,d,\mu)$ be a metric measure space equipped with a symmetric Dirichlet form $\mc{E}$. Denote the heat semigroup and $L^2(M)-$self-adjoint operator associated with $\mc{E}$ by $P_t$ and $\mc{L}$, respectively.
We study the multiplicative stochastic heat equation, formally written as \begin{equation}\label{eq:SHE}
\partial_t u(t,x)=\mc{L} u(t,x)+\beta\,f(u(t,x))\,\dot W(t,x), \quad t>0,x\in M.
\end{equation}
Here, \(\beta>0\) is the inverse temperature parameter, $f:\RR\to\RR$ is a Lipschitz function with Lipschitz paramter $L_f,$ that is, \(|f(u)-f(\tilde{u})|\leq L_f|u-\tilde{u}|\) and $f(0)=0$; \(\dot{W}=\dot{W}(t,x)\) is a centered Gaussian noise with covariance kernel $R$ (the exact definition will be given in section \ref{sec:prelim}). We will assume $R$ satisfies the following assumption for this paper.
\begin{assumption}\label{assump:WD}
    Let $p_t$ denote the heat kernel, i.e. the kernel of the heat semigroup $P_t$, which we will assume exists. We assume there exists $\Lambda\in(0,\infty)$ such that \[\sup_{x,x'\in M}\int_0^{\infty}\iint_{M^2}p_t(x,y)p_t(x',y')|R(y,y')|dydy'dt=\Lambda.\]
\end{assumption}

\noindent We say any $h:M\to\RR$ satisfying $\mc{L}h=0$ is a harmonic function.
\begin{theorem}\label{thm:main}
Let $u_0\in L^\infty(M)$ and suppose Assumption~\ref{assump:WD} holds. Let $u$ denote the solution of \eqref{eq:SHE} starting from $u_0$.

\begin{enumerate}
\item[(i)] If $(\beta L_f)^2\Lambda<1$, then $u$ is in the weak disorder regime,
\[
\sup_{t\ge0,x\in M}\EE[u(t,x)^2]<\infty.
\]

\item[(ii)] Under the same condition, the set of stationary solutions of \eqref{eq:SHE} $Z$ (see Definition \ref{def: stationary sol and inv meas}) satisfying for one (and thus all) $t\in\RR$
\[
\sup_{x\in M}\EE[Z(t,x)^2]<\infty
\]
are in one-to-one correspondence with bounded harmonic functions $\mathrm{Har}_b(M)$.\\
More precisely, for each $h\in \mathrm{Har}_b(M)$ there exists a random field \(\set{Z^h(x)}_{x\in M}\) whose law is the unique invariant measure of \eqref{eq:SHE} at inverse temperature $\beta$ satisfying $\EE[Z^h(x)]=h(x)$ and \[u^h(t,\cdot)\stackrel{t\uparrow\infty}{\longrightarrow}Z^h\quad \text{ in }L^2(\Omega),\] where $u^h(t,x)$ is the solution of \eqref{eq:SHE} starting from $h$.
\end{enumerate}
\end{theorem}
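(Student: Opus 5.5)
For part (i), I will work with the mild formulation
\[
u(t,x)=P_tu_0(x)+\beta\int_0^t\!\!\int_M p_{t-s}(x,y)f(u(s,y))W(ds,dy).
\]
Taking second moments and using the It\^o isometry with covariance $R$, together with $|f(u)|\le L_f|u|$, gives
\[
\EE[u(t,x)^2]\le \|u_0\|_\infty^2+\beta^2L_f^2\int_0^t\!\!\iint p_{t-s}(x,y)p_{t-s}(x,y')|R(y,y')|\,\EE[u(s,y)^2]^{1/2}\EE[u(s,y')^2]^{1/2}dydy'ds .
\]
Here I have also used that the mean term $P_tu_0$ is orthogonal to the martingale term. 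I set $m(t)=\sup_{s\le t,x}\EE[u(s,x)^2]$, which is finite for each $t$ by standard Picard/Gronwall arguments. Assumption~\ref{assump:WD} then yields $m(t)\le\|u_0\|_\infty^2+(\beta L_f)^2\Lambda\, m(t)$, and hence $m(t)\le \|u_0\|_\infty^2/(1-(\beta L_f)^2\Lambda)$ uniformly in $t$.

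For existence in (ii), fix $h\in\mathrm{Har}_b(M)$, so $P_th=h$. For each $T>0$ I will run the equation on $[-T,\infty)$ from $h$ with the same two-sided noise, calling the solution $u_T$, and I will show that $u_T(0,x)$ is Cauchy in $L^2(\Omega)$ as $T\to\infty$. For $T'>T$, the difference $v=u_{T'}-u_T$ satisfies on $[-T,0]$
\[
v(t)=P_{t+T}\big(u_{T'}(-T)-h\big)+\beta\int_{-T}^t p\,(f(u_{T'})-f(u_T))\,dW .
\]
The first term is a martingale increment from $[-T',-T]$; I will write it as $\beta\int_{-T'}^{-T}p_{t-s}f(u_{T'})dW$ using the semigroup property and $P h=h$. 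Its second moment is bounded by $(\beta L_f)^2 C\int_{t+T}^{\infty}\iint p_rp_r|R|$, the tail of the integral in Assumption~\ref{assump:WD}. The Lipschitz term is then contracted exactly as in (i), so $\sup_x\EE[v(0,x)^2]\le C\,\varepsilon(T)/(1-(\beta L_f)^2\Lambda)$.

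The main obstacle is that $\varepsilon(T)\to0$ needs the tail of the time integral to vanish uniformly in $x$. Assumption~\ref{assump:WD} only gives finiteness of the supremum, not uniform tail decay. To handle this I will not bound the tail by $\sup_x$; instead I will carry the $x$-dependent kernel $K(x,t)=\int_t^\infty\iint p_rp_r|R|$ through a Volterra-type iteration, i.e.\ sum the resulting chaos/Neumann series, in which each term picks up a factor $(\beta L_f)^2\Lambda$ and the tail localizes on one time segment. Dominated convergence in each term, combined with geometric summability, then gives convergence pointwise in $x$. Uniformity in $x$ is not needed for the statement.

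Let $Z^h(x)=\lim_T u_T(0,x)$. It solves \eqref{eq:SHE} on $\RR$ with time shift covariance, so its law is invariant. Its mean is $h$, since the martingale term has mean zero, and $\sup_x\EE[(Z^h)^2]<\infty$ by (i). The same comparison with $T'=\infty$ shows $u^h(t)\to Z^h$ in $L^2$ after the time shift. For uniqueness and the bijection, given a stationary $Z$ with bounded second moments, I set $h(x)=\EE Z(t,x)$, which is independent of $t$ by stationarity. The mild equation gives $h=P_sh$, so $h$ is harmonic, and it is bounded by Cauchy--Schwarz. To show $Z=Z^h$, I will compare $Z$ started at time $-T$ with $u_T$: the difference starts from $Z(-T)-h$, a mean-zero field with bounded variance, and $P_T$ of it must tend to $0$ in $L^2$. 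This last decay is the second delicate point. I plan to obtain it by writing $Z(-T)-h$ itself via its mild representation from $-\infty$ as a stochastic integral, so that $P_T$ of it is again a tail of the Assumption~\ref{assump:WD} integral. The contraction argument then forces $Z=Z^h$, which gives injectivity and completes the correspondence.
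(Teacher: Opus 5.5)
Your part (i) is sound, and so is your existence argument. Carrying the $x$-dependent tail $\int_a^\infty\iint_{M^2}p_r(x,y)p_r(x,y')|R(y,y')|\,dy\,dy'\,dr$ through the Neumann series, and applying dominated convergence term by term, handles two things at once. It copes with the non-uniformity of the tail in $x$. It also copes with the fact that the forcing is not small at times close to $-T$. Each term is dominated by $((\beta L_f)^2\Lambda)^n$ times a constant, and the tail sits on the last time segment. This is more careful than the paper's supremum contraction in its existence proof.

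The genuine gap is in the uniqueness step, exactly at the point you call delicate. The definition of a stationary solution only provides the mild equation between finite times $t<t'$. The integral $\beta\int_{-T-S}^{-T}\int_M p_{-T-s}(\cdot,y)f(Z(s,y))\,W(ds,dy)$ converges in $L^2(\Omega)$ as $S\to\infty$. Hence the representation
\[
Z(-T,\cdot)-h=\beta\int_{-\infty}^{-T}\int_M p_{-T-s}(\cdot,y)f(Z(s,y))\,W(ds,dy)
\]
is equivalent to $P_S\big(Z(-T-S,\cdot)-h\big)\to0$ in $L^2(\Omega)$. That is exactly the decay you want to derive from it, so the argument is circular.

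Nor can this decay follow from stationarity, the mild equation, mean $h$ and bounded second moments alone. Take $f(u)=u$, $h\not\equiv0$, and $\xi\in L^2$ independent of $W$, non-constant with $\EE\xi=1$. The field $\xi Z^h$ has all of these properties relative to the filtration $\mathcal{F}_t\vee\sigma(\xi)$. Yet $P_T\big(\xi Z^h(-T,\cdot)-h\big)\to(\xi-1)h\neq0$, and its law differs from that of $Z^h$.

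The missing ingredient is the one the paper uses. $Z$ is adapted to the noise filtration $\mathcal{F}_t$, and $\mathcal{F}_{-\infty}=\bigcap_T\mathcal{F}_{-T}$ is trivial by Kolmogorov's 0--1 law, since noise on disjoint time intervals is independent. The stochastic integral over $(-T,0]$ has zero conditional mean given $\mathcal{F}_{-T}$, so backward martingale convergence gives
\[
P_TZ(-T,\cdot)(x)=\EE[Z(0,x)\mid\mathcal{F}_{-T}]\longrightarrow\EE[Z(0,x)\mid\mathcal{F}_{-\infty}]=h(x)\quad\text{in }L^2(\Omega).
\]
Once you have this, stationarity makes $\EE[(P_r(Z(t,\cdot)-h)(y))^2]$ independent of $t$, bounded, and vanishing as $r\to\infty$ for each $y$. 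Adaptedness also makes $P_{t+T}(Z(-T,\cdot)-h)$ orthogonal to the stochastic integral over $(-T,t]$. Your comparison of $Z$ with $u_T$ then closes by the same term-by-term dominated convergence as in the existence step.

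That comparison is a genuinely different route from the paper's. The paper contracts $\sup_{x,y}|\EE[\delta(0,x)\delta(0,y)]|$ for two stationary solutions $V^1,V^2$. Your route avoids needing joint stationarity of the pair $(V^1,V^2)$.
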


\begin{remark}[On the moment assumption]
The uniform pointwise second-moment bound imposed in Theorem~\ref{thm:main} is the natural condition corresponding to the weak disorder/diffusive regime. In the linear case $f(u)=u$, it indicates that the partition function of the associated polymer has uniformly bounded second moments, reflecting the absence of localization and the persistence of diffusive behavior. The class of invariant random fields considered here therefore corresponds to physically relevant stationary polymer states in the weak disorder phase.
\end{remark}

\begin{remark}
    The well-posedness and moment bound condition does not depend on the requirement $f(0)=0$. However, $f(0)=0$ does imply that $u\equiv 0$ is a solution, is known to be connected to positivity preservation\cite{CH19}, and it is needed to makes the arguments we use work (see also \cite[Section 5]{COTX25}). After the first version of this work was released, \cite{JKKM26} was able to classify all annealed, ergodic invariant measures without this extra assumption on $f$. We will discuss this in the literature review.
\end{remark}

Next is a fluctuation result which differs across harmonic sectors.
\begin{theorem}\label{thm:fluc}
Let $(\beta L_f)^2\Lambda<1$ and $h\in\mathrm{Har}_b(M)$. Let $Z_\beta^h$ be the stationary solution of \eqref{eq:SHE} at inverse temperature $\beta$ with mean $h$ at time 0 (thus any time $t\in \RR$) driven by some two-sided noise $W$. Let $G_h$ be a random field on $M$ defined by
\[
G_h(x)=\int_0^\infty\int_M p_{s}(x,y)f(h(y))\,W(dy,ds).
\]
Then
\[
\frac{Z_\beta^h(0,\cdot)-h}{\beta}\;\stackrel{\beta\downarrow0}{\longrightarrow}G_h
\quad\text{in }L^2(\Omega).
\]
\end{theorem}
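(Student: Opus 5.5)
We prove Theorem~\ref{thm:fluc} by comparing the mild (Duhamel) representation of the stationary solution with the Gaussian field $G_h$. Since $Z_\beta^h$ is stationary, is driven by the two-sided noise $W$, and has mean $h$, we expect, and would first establish from the construction in Theorem~\ref{thm:main}(ii), the representation
\[
Z_\beta^h(0,x)=h(x)+\beta\int_{-\infty}^0\int_M p_{-s}(x,y)f\big(Z_\beta^h(s,y)\big)\,W(dy,ds).
\]
To obtain it, take the $L^2(\Omega)$ limit as $T\to\infty$ of the solution started from $h$ at time $-T$; this limit is $Z^h_\beta(0,\cdot)$ by the convergence statement in Theorem~\ref{thm:main}(ii). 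In that solution the drift term is $P_T h=h$ because $h$ is harmonic. The stochastic integrals converge thanks to the uniform second-moment bound together with Assumption~\ref{assump:WD}. After reversing time, $s\mapsto -s$, by stationarity of the noise, $G_h(x)$ equals in law, and in fact pathwise with the two-sided convention, $\int_{-\infty}^0\int_M p_{-s}(x,y)f(h(y))W(dy,ds)$. We adopt this convention so that $G_h$ is built from the same noise.

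Subtracting, we get
\[
\frac{Z_\beta^h(0,x)-h(x)}{\beta}-G_h(x)=\int_{-\infty}^0\int_M p_{-s}(x,y)\Big[f\big(Z_\beta^h(s,y)\big)-f(h(y))\Big]W(dy,ds).
\]
We bound its second moment with the Itô isometry for the colored noise, then Cauchy--Schwarz and the Lipschitz property of $f$:
\[
\EE\big|\cdots\big|^2\le L_f^2\int_0^\infty\iint p_s(x,y)p_s(x,y')|R(y,y')|\,\sqrt{\EE D(y)^2}\sqrt{\EE D(y')^2}\,dy\,dy'\,ds\le L_f^2\Lambda\,\sup_{y}\EE\big[D(y)^2\big],
\]
where $D(y)=Z^h_\beta(s,y)-h(y)$. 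By stationarity, $\sup_y\EE[D(y)^2]$ does not depend on $s$.

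It therefore suffices to show $\delta_\beta:=\sup_y\EE[(Z_\beta^h(0,y)-h(y))^2]\to0$ as $\beta\downarrow0$. Applying the same isometry estimate to the representation of $Z^h_\beta$ gives
\[
\delta_\beta\le\beta^2 L_f^2\Lambda\sup_y\EE\big[Z_\beta^h(0,y)^2\big]\le 2\beta^2L_f^2\Lambda\big(\|h\|_\infty^2+\delta_\beta\big).
\]
Here we used $|f(z)|\le L_f|z|$, which holds since $f(0)=0$. For $\beta$ small enough that $2\beta^2L_f^2\Lambda<1$, this rearranges to $\delta_\beta\le C\beta^2\|h\|_\infty^2\to0$, using that $\delta_\beta<\infty$ by Theorem~\ref{thm:main}(i)--(ii). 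Combining the two estimates gives an $L^2(\Omega)$ error of order $O(\beta)$, uniformly in $x$.

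The main obstacle is the first step: justifying the infinite-past mild representation for the stationary solution coupled to the given noise. This requires confirming that the stationary solution in Definition~\ref{def: stationary sol and inv meas} is the pullback limit of solutions started from $h$. It also requires identifying $G_h$ with a stochastic integral against the same $W$, up to the time reflection, rather than merely matching it in law. If only convergence in law were available, the argument above would still give convergence in law with matched covariances, but the $L^2(\Omega)$ claim needs the coupling.
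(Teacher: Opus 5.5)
Your proposal is correct and follows the paper's proof essentially step for step. The paper also uses the infinite-past representation of $Z^h_\beta(0,x)$ obtained from the pullback construction to show $M_\beta=\sup_x\EE[(Z^h_\beta(0,x)-h(x))^2]=O(\beta^2)$, and then bounds $\EE[(\beta^{-1}(Z^h_\beta(0,x)-h(x))-G_h(x))^2]\le L_f^2\Lambda M_\beta$ by the It\^o--Walsh isometry; it avoids your factor $2$ by using $\EE[Z^h_\beta(0,x)-h(x)]=0$. Your realization of $G_h$ as $\int_{-\infty}^0\int_M p_{-s}(x,y)f(h(y))\,W(dy,ds)$ with the same noise is the identification the paper makes only implicitly, and it is genuinely needed: the literal $\int_0^\infty$ field is independent of $\mathcal{F}_0$, so the $L^2(\Omega)$ convergence would fail.
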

\begin{remark}
    By the definition of stationary solutions (Definition \ref{def: stationary sol and inv meas}), the choice of $0$ in the above theorem statement is arbitrary and can be replaced by any $t\in \RR$. Also, it obviously implies an analogous statement about the invariant measure.
\end{remark}
Finally, we show that the invariant measures identified in Theorem~\ref{thm:main} completely determine the long time dynamics of the solution $u$ under mild assumptions on the initial condition.
\begin{theorem}\label{thm:IC determine long time}
Suppose $u_0\in L^\infty(M)$ satisfies
\[
P_tu_0 \to h\in \mathrm{Har}_b(M)
\quad\text{in }C(M)
\]
as $t\to\infty$, and $(\beta L_f)^2\Lambda<1$. Then as $t\uparrow\infty$, the solution of \eqref{eq:SHE} starting from $u_0$, denoted by $u$, converges in $L^2(\Omega)$ to a random field $\set{Z_\beta^h(x)}_{x\in M}$, whose law is an invariant measure with mean $h$ (again see Definition \ref{def: stationary sol and inv meas}).
\end{theorem}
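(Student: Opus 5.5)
We compare $u$ with the solution $u^h$ started from $h$, which by Theorem~\ref{thm:main}(ii) converges in $L^2(\Omega)$ to $Z^h_\beta$. It therefore suffices to show that $\sup_x\EE[(u(t,x)-u^h(t,x))^2]\to0$ as $t\to\infty$. Both solutions are driven by the same noise. Write the mild formulations
\[
u(t,x)=P_tu_0(x)+\beta\int_0^t\int_M p_{t-s}(x,y)f(u(s,y))W(dy,ds),
\]
and similarly for $u^h$, with $P_th=h$ because $h$ is bounded harmonic. Setting $D(t,x)=u(t,x)-u^h(t,x)$ gives
\[
D(t,x)=(P_tu_0-h)(x)+\beta\int_0^t\int_M p_{t-s}(x,y)\big(f(u(s,y))-f(u^h(s,y))\big)W(dy,ds).
\]

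\textbf{Second-moment inequality.} By the Itô isometry, $|R|$, the Lipschitz bound, and Cauchy--Schwarz $\EE|D(s,y)D(s,y')|\le \phi(s)$ with $\phi(s):=\sup_y\EE[D(s,y)^2]$, we get
\[
\phi(t)\le 2\|P_tu_0-h\|_\infty^2+2(\beta L_f)^2\int_0^t K(t-s)\,\phi(s)\,ds,
\]
where
\[
K(r)=\sup_{x}\iint p_r(x,y)p_r(x,y')|R(y,y')|\,dy\,dy'
\]
and $\int_0^\infty K\le\Lambda$.

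The factor $2$ from the elementary inequality is wasteful. We should instead expand $\EE[D^2]$ exactly: it equals the deterministic part squared plus the stochastic part, with no cross term, since the stochastic integral has mean zero. This yields
\[
\phi(t)\le a(t)+\gamma\int_0^t K(t-s)\phi(s)\,ds,\qquad \gamma=(\beta L_f)^2,\quad a(t)=\|P_tu_0-h\|_\infty^2\to0.
\]

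A subtlety is that the sup over $x$ should be taken with the kernel $K$ defined via the sup over $x,x'$, as in Assumption~\ref{assump:WD}; the diagonal suffices here. The function $\phi$ is bounded, by Theorem~\ref{thm:main}(i) applied to both $u$ and $u^h$ (with $u_0$, $h\in L^\infty$).

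\textbf{Renewal-type argument.} With $\gamma\Lambda<1$ we show $\phi(t)\to0$. Let $\bar\phi=\limsup_{t\to\infty}\phi(t)<\infty$. Fix $\varepsilon>0$ and choose $T$ such that $\phi(s)\le\bar\phi+\varepsilon$ and $a(s)\le\varepsilon$ for $s\ge T$. Split the convolution into $s\in[0,T]$ and $s\in[T,t]$. The first piece is at most
\[
\gamma\,\|\phi\|_\infty\int_{t-T}^{t}K(r)\,dr,
\]
which tends to $0$ as $t\to\infty$ because $K\in L^1$ (the tail of an integrable function). The second piece is at most $\gamma\Lambda(\bar\phi+\varepsilon)$. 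Taking the limsup gives
\[
\bar\phi\le\varepsilon+\gamma\Lambda(\bar\phi+\varepsilon),
\]
so $\bar\phi\le \varepsilon(1+\gamma\Lambda)/(1-\gamma\Lambda)$, and hence $\bar\phi=0$.

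\textbf{Conclusion.} For each $x$ we then have
\[
\|u(t,x)-Z^h_\beta(x)\|_{L^2}\le \|D(t,x)\|_{L^2}+\|u^h(t,x)-Z^h_\beta(x)\|_{L^2}\to0.
\]
The law of $Z^h_\beta$ is the invariant measure with mean $h$ from Theorem~\ref{thm:main}.

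\textbf{Main obstacle.} The delicate point is justifying the moment inequality, specifically that the relevant kernel is integrable with integral at most $\Lambda$ after taking suprema. The way $K$ is defined by a sup inside the time integral must be compatible with Assumption~\ref{assump:WD}, which puts the sup outside the time integral. If it is not, I would instead iterate the mild equation in chaos-like fashion. That route bounds $\phi(t)$ by $\sum_n\gamma^n$ times $n$-fold integrals of $a$ along heat kernel paths, each bounded by $\Lambda^n$ together with the tail control from $a(s)\to0$, and then concludes by dominated convergence in $n$.
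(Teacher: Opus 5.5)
Your proposal is essentially the paper's proof: you use the same comparison with $u^h$ and the same Volterra-type inequality $\phi(t)\le a(t)+(\beta L_f)^2\int_0^tK(t-s)\phi(s)\,ds$ with the same sup-inside kernel $K$, and your $\varepsilon$--$T$ splitting of the convolution is a hands-on version of the paper's reverse-Fatou step. The obstacle you flag is genuine and the paper shares it, since it asserts $\int_0^tK\uparrow\Lambda$ although Assumption~\ref{assump:WD} only controls $\sup_x\int_0^\infty$ (so $\int_0^\infty K$ may be larger); your fallback does close it if you run it pointwise in $x$ rather than for $\phi$: bound $\rho(s,y)\rho(s,y')\le\frac12(\rho(s,y)^2+\rho(s,y')^2)$ with $\rho=\|D\|_{L^2(\Omega)}$, iterate the resulting linear inequality (whose operator has $L^\infty$-norm at most $\Lambda$ with the sup outside the time integral), and apply dominated convergence in $(r,y,y')$ at each fixed order and then in $n$, which gives $\EE[D(t,x)^2]\to0$ for each fixed $x$ (all the theorem needs), though not necessarily uniformly in $x$.
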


The paper is organized as follows. We will we list some useful preliminary definitions and fix some notation, discuss existing literature and discuss the results on spaces with nontrivial Martin boundary to finish the introduction. We will then move on to prove the main theorem in section \ref{sec:pf main}. Item (i) requires almost no new mathematics, so most of the section is reserved for proving item (ii), the classification result. For that, we first show existence given a bounded positive harmonic $h$ result using an argument originating in \cite{GL20} (see also \cite{GHL25,COTX25}), then we conclude by showing uniqueness per $h$ by a chaos expansion inspired contraction. We then move onto proving Theorems \ref{thm:fluc} and \ref{thm:IC determine long time} in sections \ref{sec:fluc} and \ref{sec:long time}, respectively.

\subsection{Preliminaries}\label{sec:prelim}
We will first fix some notation and conventions used for the rest of the paper.
\begin{itemize}
    \item We use $C,c$. to denote generic constants that are independent of quantities of interest. The exact values of these constants may change from line to line.
    \item $\mathrm{Har}_b(M)$ will denote the space of bounded harmonic functions on $M$, and $\mathrm{Har}^+_b(M)$ denotes the subset of positive bounded harmonic functions.
    \item $dz$ will denote integration with respect to the reference measure $\mu$ on $M$ in the variable $z$.
\end{itemize}
We now introduce centered Gaussian noises with covariance kernel $R$ and rigorously define the notion of a solution. Any positive-definite function $R$ on $M$ defines an inner product by \[\braket{\phi,\psi}_R:=\int_{0}^{\infty}\iint_{M^2}\phi(t,x)R(x,y)\psi(t,y)dxdydt.\]
We denote the Hilbert space of space-time functions corresponding to $\braket{\cdot,\cdot}_R$ by $\mathcal{H}_R$. On a probability space $(\Omega,\mathcal{G},\PP)$, the noise $\dot W$ is defined as the isonormal Gaussian process\cite{Nualart95TheMC} over $\mathcal{H}_R$ such that $\mathcal{G}=\sigma(\dot W)$. We will not denote its $R-$dependence as there is no concern for confusion.\\
The stochastic integral of a random field $F$ with respect to $W$, denoted by \[I(F):=\int_0^\infty\int_M F(t,x) W(dx,dt),\]
is defined for any $F$ such that \[\EE[I(F)^2]=\int_0^\infty\iint_{M^2}\EE[F(t,x)F(t,y)]R(x,y)dxdydt<\infty.\]
We call such $F$ the $W-$integrable random fields. For any $F,\widetilde{F}$ which are $W-$integrable, we have \[\EE[I(F)I(\widetilde F)]=\int_0^\infty\iint_{M^2}\EE[F(t,x)\widetilde F(t,y)]R(x,y)dxdydt.\]
We call the above formulas It\^o--Walsh isometry.

\begin{definition}
    \label{def of Ito sol}
A process $u=\{u(t,x); (t,x)\in\mathbb{R}_{+}\times M \}$ is called a random field solution of~\eqref{eq:SHE} in the It\^o-Walsh sense \cite{Walsh86} if the following conditions are met:
\begin{enumerate}
\item $u$ is adapted;
\item $u$ is jointly measurable with respect to $\mathcal{B}(\mathbb{R}_{+}\times M )\otimes \mathcal{G}$; 
\item $\mathbf{E}(u(t,x)^2)<\infty$ for all $(t,x)\in\mathbb{R}_{+}\times M$;
\item The function $(t,x)\to u(t,x)$ is continuous in $L^2(\Omega)$;
\item $u$ satisfies \[\label{eq:SHE-mild}u(t,x)=P_tu_0(x)+\beta\int_0^t\int_M p_{t-s}(x,y)f(u(s,y))W(ds,dy)\] almost surely for all $(t,x)\in\mathbb{R}_{+}\times M$.
\end{enumerate}
\end{definition}

\begin{remark}
    We note that by It\^o--Walsh isometry, the second moment of the solution $u$ starting from $u_0$ has the useful representation \[\EE[u(t,x)]=P_tu_0(x)^2+\beta^2\int_0^t \iint_{M^2}p_{t-s}(x,y)p_{t-s}(x,y')R(y,y')\EE[f(u(s,y))f(u(s,y'))]dydy'ds.\]
    Assumption \ref{assump:WD} implies that for all $T>0$, we have \[\sup_{x,x'\in M}\int_0^T\iint_{M^2}p_t(x,y)p_t(x',y')|R(y,y')|dydy'dt<\infty.\]
    By standard arguments (c.f. \cite{Dalang98,ChenKim19,BOTW-Heisenberg,BCHOTW}) using the above moment representation, \eqref{eq:SHE} with initial $u_0\in L^\infty(M)$ admits a unique solution.
\end{remark}

We now define the notions of invariant measures and stationary solutions, which clarifies the statements of Theorem \ref{thm:main}. 

\begin{definition}\label{def: stationary sol and inv meas}
    We extend the noise $W$ to the indefinite past by simply extending the temporal domain of the stochastic integral $I$ from $\RR_+$ to $\RR$. This allows us to start equation \eqref{eq:SHE} from any time $t_0\in \RR$ by simply making the appropriate integration domain changes in time and the solution adapted to the filtration $\mc{F}_{T}:=\sigma(W((-\infty,T]\times M))\vee \mathcal{N}$.\\
    A random field \(\set{Z(t,x)}_{t\in \RR,x\in M}\) is a \textbf{stationary solution} of \eqref{eq:SHE} if \begin{enumerate}
        \item For any $t<t'\in \RR$, \(Z(t',\cdot)\) is the mild solution to \eqref{eq:SHE} starting from $Z(t,\cdot)$ at time $t'-t$, that is, for all $x\in M$, \[Z(t',x)=P_{t'-t}Z(t,\cdot)(x)+\int_{t}^{t'}\int_M p_{t'-s}(x,y)f(Z(s,y))W(ds,dy).\]
        \item For any $t,t'\in \RR$, we have as random fields in space \(Z(t,\cdot)\stackrel{\mathrm{law}}{=}Z(t',\cdot)\). In other words, for any $n\in \NN_+$, $\{x_i\}_{i=1}^n\subset M$ and $\{y_j\}_{j=1}^n\subset \RR$, we have for all $t>0$ \[\PP[Z(t,x_1)\le y_1,\dots, Z(t,x_n)\le y_n]=\PP[Z(t'
        ,x_1)\le y_1,\dots, Z(t',x_n)\le y_n].\]
    \end{enumerate}
    The law of a stationary solution $Z$ at any time $t$ is an \textbf{invariant measure} of \eqref{eq:SHE}.
\end{definition}

\begin{remark}
    For connections to Da Prato's infinite dimensional evolution point of view, we refer the reader to \cite{COTX25}.
\end{remark}

\subsection{Literature Review}

Equation \eqref{eq:SHE} is interesting on its own\cite{Molchanov1991IdeasIT} and for it's connection to various other areas of probability theory and mathematical physics. In the linear case $f(u)=u$, it is the partition function of directed polymers in a random environment\cite{ACQ11,Comets2004DirectedPI,Cosco2020DirectedPO,DasZhu,DGLpolymer} and gives rise to the Cole--Hopf solution of the KPZ equation \cite{ Ka87, KPZ86}. It also has direct connections to the stochastic Burgers equation \cite{BCJ94,BertiniGiacomin97,CM94} and to Majda’s model of shear-layer flow in turbulent diffusion \cite{Mj93}. In the last few decades, there has been significant progress in understanding fundamental properties of \eqref{eq:SHE} on flat spaces \(\RR^d\) and \(\TT^d\), including intermittency (see, e.g., \cite{ChenKim19, Kh14} and references therein), as well as energy landscapes and fluctuations \cite{DGK23,GHL25,GK24,GL20}. In particular, invariant measures have been heavily studied\cite{COTX25,DGRZ18,DM24,DE24,GQ24,GRSS25} due to their connections with fluctuation theory and other deeper properties. Notably, it has been shown that nontrivial invariant fields exist in the weak disorder/diffusive regime and this has been connected to some of the aforementioned fluctuation results.

In contrast, the literature on non-Euclidean settings has only begun to develop recently, almost all of it for the linear case and focused on well-posedness and/or pointwise moment or almost sure asymptotics. On compact Riemannian manifolds, \cite{TV02,CO25} showed that strong disorder/localization always occurs, the latter also dealing with difficulties introduced by geometry for well-posedness if one allows measure-valued initial conditions. Well-posedness theory for bounded initial conditions, on the other hand, have been extended to stranger geometries such as sub-Riemannian manifolds\cite{BOTW-Heisenberg} and fractals \cite{BCHOTW,HY16}. All the above works have also dealt with moment estimates, which are studied for their connection to intermittency/localization. Most recently, the effect of negative curvature on moment and pointwise asymptotics have begun to be explored. In \cite{GX25,GWX25}, it was shown that for the linear case with noise independent of time, moment asymptotics in Hyperbolic space do not behave differently compared to the Euclidean case, but almost sure pointwise asymptotics grow at a different rate. For the white in time noises (the setting of this paper), \cite{BCO24} showed that for a natural class of noises, negative curvature generates a weak disorder regime. This result was sharpened in \cite{geng-ouyang}, who showed that all noises with $R$ polynomially decaying at rate $r>1$ gives rise to such a regime. Remarkably, they discovered that if $0<r<1$ instead, in place of a weak disorder regime a new regime where strong disorder (\(\lim_{t\to+\infty}\EE[u(t,x)^2]=\infty\)) still holds, but the rate of growth is sub-exponential as opposed to exponential. This phenomenon has never been observed before in any models of the linear equation with white in time noise.

Motivated by these developments, one may ask whether the invariant measures of \eqref{eq:SHE} in the weak disorder (diffusive) regime is connected to the structure of harmonic functions on the underlying space. In Euclidean settings, this question is trivial due to Liouville’s theorem, which forces all bounded harmonic functions to be constant and thus yields a unique spatially homogeneous stationary state. In contrast, on non-compact spaces with nontrivial Martin boundary, such as negatively curved manifolds\cite{Ancona87,AndersonSchoen85} or trees\cite{Croydon2008}, the abundance of bounded positive harmonic functions suggests the possibility of a rich structure of invariant measures reflecting boundary geometry. Theorem~\ref{thm:main} answers this question in a precise and complete form: invariant random fields with uniformly bounded second moments are in one-to-one correspondence with bounded positive harmonic functions. In other words, the invariant measures inherit the structure possessed by the bounded positive harmonic functions, establishing a connection with Martin boundary theory. To our knowledge, this provides the first example of a classification of invariant measures for a stochastic PDE in terms of the harmonic (or boundary) structure of the underlying space.

As mentioned before, the recent work \cite{JKKM26} classified all annealed ergodic measures of \eqref{eq:SHE} with bounded second moment without the assumption $f(0)=0$ on $\RR^d,d\ge3$. They had to deal with many measure theoretic issues, such as proving that invariant measures are supported on genuine random fields and establishing rigidity via spatial averaging. Their classification is parameterized by the mean, which corresponds to constant functions, i.e.\ the bounded harmonic functions on $\mathbb{R}^d$ by Liouville’s theorem. Thus, in the $f(0)=0$ case considered here, the two approaches lead to consistent parameterizations, although by very different mechanisms. It remains open whether the annealed ergodic measures identified in the aforementioned work exhaust all invariant measures even on $\mathbb{R}^d$ without the assumption $f(0)=0$. The present results suggest that, within the $L^2$-bounded class considered here, no additional invariant measures arise. It is also unclear whether such agreement between harmonic and ergodic classifications persists on more general spaces, where both the Liouville property and translation invariance are absent.

\subsection{Theorem \ref{thm:main} on Spaces with Nontrivial Martin Boundary}\label{sec:martin bd}

Invariant measures for stochastic heat equations and related SPDEs have recently attracted significant interest in Euclidean settings \cite{GL20,DM24,GQ24,GHL25,COTX25}. However, Euclidean spaces satisfy the Liouville property, leading to a one-parameter family of invariant measures at fixed $\beta$. If Theorem \ref{thm:main} is nontrivially valid on spaces with nontrivial Martin boundary, we obviously have a much richer classification due to the much richer nature of the family of bounded Harmonic functions. We will discuss this in some detail in this section. For brevity, the proofs of all results in this section will not be included here, but they are present in the arXiv version.

\noindent\textbf{Martin boundary and boundary integral representation}\\
For detailed discussions of Martin boundary theory, see \cite[Ch14]{chung2005markov} and \cite[Ch7]{Pinsky_1995}.\\
Assume that the Markov process associated with $\mathcal{E}$ is transient and admits a Green kernel
\[
G(x,y):=\int_0^\infty p_t(x,y)\,dt <\infty
\quad \text{for $\mu$-a.e.\ $x\neq y$.}
\]
Fix a base point $o\in M$. The \emph{Martin kernel} is defined by
\[
K(x,y):=\frac{G(x,y)}{G(o,y)}.
\]
The \emph{Martin compactification} $\overline M$ is the smallest compactification for which the maps
$y\mapsto K(\cdot,y)$ extend continuously, and the \emph{Martin boundary} is $\partial M:=\overline M\setminus M$. $\partial_M$ is in fact independent of the base point chosen, and is thus well-defined.

Under standard hypotheses (e.g.\ those ensuring solvability of the Dirichlet problem at infinity and the boundary Harnack principle in various concrete models), every $h\in \mathrm{Har}_b^+(M)$ admits a boundary integral representation
\begin{equation}\label{eq:martin-rep}
h(x)=\int_{\partial M} K(x,\xi)\,\nu(d\xi),
\end{equation}
for a finite positive measure $\nu$ on $\partial M$; moreover extremal rays of $\mathrm{Har}_b^+(M)$ correspond to minimal Martin boundary points.

Assumption \eqref{assump:WD} holds on $\RR^d$ with $R\in C_c^\infty(\RR^{2d})$, it holds for $d\ge3$. \cite{BCO24,geng-ouyang} constructs examples of $R$ on negatively curved manifolds. The following gives a general construction using the semigroup kernel.

\begin{proposition}[Heat-kernel mixture covariances satisfy Assumption~\ref{assump:WD}]\label{prop:Lambda_heat_kernel_mixture}
Let $\nu$ be a finite nonnegative Borel measure on $\RR_+$ and define
\[
R_\nu(y,y') := \int_{\RR_+} p_s(y,y')\,\nu(ds).
\]
Then $R_\nu$ is symmetric and nonnegative definite.

Moreover, if
\begin{equation}\label{eq:Hnu_condition}
\int_0^\infty H(s)\,\nu(ds) < \infty,
\qquad
H(s):=\int_s^\infty \sup_{z\in M} p_u(z,z)\,du,
\end{equation}
then Assumption~\ref{assump:WD} holds for $R_\nu$, and in fact if $\nu\neq 0$
\[
0<\Lambda=\Lambda(R_\nu)\le \frac12 \int_{[0,\infty)} H(s)\,\nu(ds).
\]
\end{proposition}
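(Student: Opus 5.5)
Symmetry of $R_\nu$ comes directly from the symmetry of the heat kernel $p_s(y,y')=p_s(y',y)$, which holds because $\mc{E}$ is a symmetric Dirichlet form. For nonnegative definiteness, take a test function $\phi$ and use the semigroup property $p_s=\int p_{s/2}(y,z)p_{s/2}(z,y')\,dz$. This gives
\[
\iint_{M^2}\phi(y)R_\nu(y,y')\phi(y')\,dy\,dy'=\int_{\RR_+}\int_M \bigl(P_{s/2}\phi(z)\bigr)^2\,dz\,\nu(ds)\ge 0 .
\]
The exchange of integrals is justified by Tonelli after splitting $\phi$ into positive and negative parts, or by restricting first to $\phi\in L^1\cap L^2$. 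At $s=0$ one interprets $p_0$ as the identity, and the term $\nu(\{0\})\|\phi\|_2^2$ is still nonnegative.

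For the bound on $\Lambda$, note that $R_\nu\ge0$, so the absolute value in Assumption~\ref{assump:WD} can be dropped. Apply Fubini and the Chapman--Kolmogorov identity twice:
\[
\iint_{M^2} p_t(x,y)p_s(y,y')p_t(x',y')\,dy\,dy' = p_{2t+s}(x,x').
\]
Therefore
\[
\int_0^\infty\iint_{M^2} p_t(x,y)p_t(x',y')R_\nu(y,y')\,dy\,dy'\,dt=\int_{\RR_+}\int_0^\infty p_{2t+s}(x,x')\,dt\,\nu(ds).
\]
Next use Cauchy--Schwarz together with the semigroup property and symmetry:
\[
p_{u}(x,x')=\int_M p_{u/2}(x,z)p_{u/2}(z,x')\,dz\le \sqrt{p_u(x,x)\,p_u(x',x')}\le \sup_z p_u(z,z).
\]
Substituting $u=2t+s$ gives $dt=du/2$, so
\[
\int_0^\infty p_{2t+s}(x,x')\,dt\le \tfrac12\int_s^\infty \sup_z p_u(z,z)\,du=\tfrac12 H(s).
\]
Taking the supremum over $x,x'$ yields $\Lambda\le\frac12\int H\,d\nu<\infty$ under \eqref{eq:Hnu_condition}.

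Strict positivity when $\nu\neq0$ follows because $p_{2t+s}(x,x)>0$ for a set of $(t,s)$ of positive $dt\otimes\nu$ measure. Indeed, $p_u(x,x)=\int p_{u/2}(x,z)^2\,dz$, and this is positive since $\int p_{u/2}(x,z)\,dz>0$. That last fact needs the process not to be killed instantly, which I take as part of the standing assumption that the heat kernel exists. One also needs $\Lambda$ to be attained as a supremum over a nonempty set, which is automatic.

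I expect the main obstacle to be measure-theoretic rather than conceptual. Pointwise identities such as Chapman--Kolmogorov and the symmetry of $p_t$ hold only $\mu$-a.e. unless one assumes a continuous version of the heat kernel. Evaluating the diagonal $p_u(z,z)$ and taking the supremum over $x,x'$ requires such a version. I would state that I use a jointly measurable version for which the semigroup identity holds everywhere, as is implicit in Assumption~\ref{assump:WD}. The atom at $s=0$ also needs care: $p_0$ is a delta, so I would either assume $\nu(\{0\})=0$ or interpret the $s=0$ term via $\int p_t(x,y)p_t(x',y)\,dy=p_{2t}(x,x')$, which gives the same bound with $H(0)$.
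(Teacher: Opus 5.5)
Your proposal is correct and follows the paper's proof: Tonelli plus Chapman--Kolmogorov reduce the quantity to $\int\!\int p_{2t+s}(x,x')\,dt\,\nu(ds)$, then the bound $p_u(x,x')\le\sup_z p_u(z,z)$ and the substitution $u=2t+s$ give $\tfrac12 H(s)$. You also fill in steps the paper leaves implicit: the identity $\iint\phi R_\nu\phi=\int\!\int (P_{s/2}\phi)^2\,dz\,\nu(ds)$ for positive definiteness, the Cauchy--Schwarz derivation of the off-diagonal bound, and the diagonal lower bound $p_u(x,x)=\|p_{u/2}(x,\cdot)\|_2^2>0$ for $\Lambda>0$, which is more careful than the paper's brief appeal to $R_\nu\not\equiv 0$.
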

\begin{proof}
Positivity is immediate, since each $p_s(\cdot,\cdot)$ is positive definite and $R_\nu$ is a nonnegative mixture of such kernels.

For the Assumption~\ref{assump:WD} quantity, by Fubini and the semigroup property,
\begin{align*}
&\int_0^\infty \iint_{M^2} p_t(x,y)\,p_t(x',y')\,R_\nu(y,y')\,dydy'\,dt \\
=&
\int_0^\infty \int_0^\infty \iint_{M^2} p_t(x,y)\,p_t(x',y')\,p_s(y,y')\,dydy'\,dt\,\nu(ds) \\
=&
\int_0^\infty \int_0^\infty p_{2t+s}(x,x')\,dt\,\nu(ds).
\end{align*}
Using $p_u(x,x')\le \sup_{z\in M}p_u(z,z)$ and the change of variables $u=2t+s$ gives
\[
\int_0^\infty p_{2t+s}(x,x')\,dt
\le
\frac12\int_s^\infty \sup_{z\in M} p_u(z,z)\,du
=
\frac12 H(s).
\]
Taking the supremum over $x,x'$ yields
\[
\Lambda(R_\nu)\le \frac12\int_0^\infty H(s)\,\nu(ds),
\]
which is finite by \eqref{eq:Hnu_condition}.

Finally, if $\nu\neq 0$, then $R_\nu\not\equiv 0$, and since the heat kernel is nonnegative, the integrand in Assumption~\ref{assump:WD} is positive on a set of positive measure. Hence $\Lambda(R_\nu)>0$.
\end{proof}

\begin{remark}
\eqref{eq:Hnu_condition} is only possible if the Markov process associated with $\mathcal{E}$ is transient, which is necessary for the Martin boundary of a space to be defined. Thus, Proposition \ref{prop:Lambda_heat_kernel_mixture} says that weak disorder and Martin boundary coexist on a large class of spaces, and Theorem \ref{thm:main} connects them via harmonic functions.
\end{remark}

\begin{remark}
For negatively curved manifolds with $\mathcal{L}$ being the Laplace-Beltrami operator, one can often identify $\partial_M$ with the geometric boundary at infinity. 
For Cartan-Hadamard manifolds with negatively pinched curvature, this is done in \cite{AndersonSchoen85}. Various generalizations were given by \cite{Ancona87,Kifer}, while the most generous current upper and lower radial curvature bounds are given in \cite{Ji} for general dimensions and \cite{Neel} for the special case of surfaces. 
Analogously, for trees (and more generally many transient graphs) $\partial_M$ can often be identified with the space of ends; see \cite{Hong2024,PicardelloWoess1987} for more precise conditions.
\end{remark}

\begin{remark}[Nontriviality of the assumption on $R$]
    Assumption \eqref{assump:WD} has already been shown to be true in various settings, all non-compact. In Euclidean space $\RR^d$ with $\mathcal{L}=\Laplace$ and $R$ bounded and compactly supported, it holds in $d\ge 3$. More interestingly, \cite[Lemma 8]{BCO24} showed that on Cartan-Hadamard manifolds with a negative section curvature upper bound and $\mathcal{L}=\Laplace_M$ the Laplace-Bemtrami operator, it holds for $R$ with Riesz-type covariance. For the special case $M=\HH^d$, \cite[Lemma 2.4]{geng-ouyang} identified the sufficient condition $|R(x,y)|\leq Cd(x,y)^{-\alpha},$ where $\alpha>1$ for the assumption to hold. These last two examples have very interesting Martin boundaries at infinity, and the verification comes down to exponential in time heat kernel bounds on diagonal. This means that extensions to non-manifold settings is also simply a matter of constructing appropriate $R$ and looking at heat kernel behavior. Notably, on regular metric trees, it should be easy to verify that the assumption holds for space-time white noise (\(R(x,y)=\delta_{x=y}\)) using known heat kernel upper bounds\cite{Croydon2008,AthreyaLoehrWinter2011,Hong2024}. This is expected to generalize to all non-amenable metric graphs. While these are all examples where $\mathcal{E}$ is local, the theorem does not require that, and indeed we expect that there are examples of nonlocal spaces and appropriate $R$ satisfying the fundamental assumption.
\end{remark}

\begin{conjecture}[Martin Representation for Invariant Measures of Linear Equation]
    In the linear case $f(u)=u$, the solution of \eqref{eq:SHE} starting from $u_0$ admits the formal Feynman-Kac representation \[u(t,x)=\EE_x\left[u_0(B^x_t)\exp\left(\beta\int_0^tW(t-s,B^x_s)ds-\frac{\beta^2t}{2}R(x,x)\right)\right]\] where $B^x$ is the Markov process associated with $\mathcal{E}$ starting from $x$ independent of $W$ and $\EE_x$ denotes expectation taken with respect to $B^x$.
    Observe using the above representation that if the linear \eqref{eq:SHE} starting from $K(\cdot,\xi)$ at fixed $\xi\in \partial_M$ is well-posed, the only dependence on $\xi$ is in the initial condition factor, so by integrating in $\xi$ and taking the $t\uparrow\infty$ limit one should be able to obtain a Martin representation for invariant measures of linear \eqref{eq:SHE}, provided invariant measures with mean $K(\cdot,\xi)$ for all fixed $\xi$ exist. Note however that the latter falls outside of the scope of Theorem \ref{thm:main}, since all $K(x,\xi)$ are unbounded in $x$. At this time, it is unclear what to expect for nonlinear $f$.
\end{conjecture}

\noindent\textbf{Equivariance under symmetry groups}\\
Many well-known examples of spaces with nontrivial Martin boundary also possess large symmetry groups. Since such symmetries often play a fundamental role in the associated potential theory, it is natural to ask whether they are inherited by the family of stationary solutions constructed in Theorem \ref{thm:main}. The following discussion records affirmative answers to that question.

Let $\mathcal G$ be a group acting measurably on $M$. We assume the action preserves the reference measure $\mu$ and intertwines the semigroup:
\[
p_t(gx,gy)=p_t(x,y)
\quad\text{for all }g\in\mathcal G,\ t>0,
\]
and that the spatial covariance kernel $R$ of the driving noise is $\mathcal G$-invariant:
\[
R(gx,gy)=R(x,y)
\quad\text{for all }g\in\mathcal G.
\]

\begin{proposition}[\(\mc{G}-\)equivariance of solutions to \eqref{eq:SHE}]
\label{prop:isom-equiv-mild}
Define the $\mathcal{G}$-action on functions (and random fields) by
\[
(g\cdot \varphi)(x):=\varphi(g^{-1}x),\qquad g\in \mc{G}.
\]
Assume the spatial covariance kernel $R$ of the noise is $G$-invariant.
Define the transformed noise by
\[
W^g(\phi):=W(\phi^g),\qquad \phi^g(t,x):=\phi(t,gx),
\]
so that formally $\dot W^g(t,x)=\dot W(t,g^{-1}x)$.

Let $u$ solve \eqref{eq:SHE} starting from $u_0\in L^\infty(M)$.
Then for each $g\in \mathcal{G}$ the shifted field
\[
u^g(t,x):=(g\cdot u(t,\cdot))(x)=u(t,g^{-1}x)
\]
is the solution of \eqref{eq:SHE} starting from $g\cdot u_0$ driven by $W^g$. In particular, since $W^g\stackrel{\mathrm{law}}{=}W$ by $\mc{G}$-invariance of $R$, the law of $u^g$ coincides with the law of the solution driven by $W$ started from $g\cdot u_0$.
\end{proposition}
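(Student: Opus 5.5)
The strategy is to show directly that $u^g$ satisfies the mild formulation in Definition~\ref{def of Ito sol} with initial datum $g\cdot u_0$ and noise $W^g$. Uniqueness of solutions (from the well-posedness remark) then identifies $u^g$ as \emph{the} solution. Finally, the law statement follows from the fact that the solution is a measurable functional of the driving noise.

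\textbf{Deterministic term.} Start from the mild equation for $u$ evaluated at the point $g^{-1}x$:
\[
u(t,g^{-1}x)=P_tu_0(g^{-1}x)+\beta\int_0^t\int_M p_{t-s}(g^{-1}x,y)f(u(s,y))W(ds,dy).
\]
Use $\mu$-invariance of the action and $p_t(g^{-1}x,y)=p_t(x,gy)$ (from $p_t(gx,gy)=p_t(x,y)$). Changing variables $y=g^{-1}z$ gives $P_tu_0(g^{-1}x)=\int p_t(x,z)u_0(g^{-1}z)dz=P_t(g\cdot u_0)(x)$.

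\textbf{Stochastic term.} The key identity to establish is that for any $W$-integrable $F$,
\[
\int_0^t\int_M F(s,y)\,W(ds,dy)=\int_0^t\int_M F(s,g^{-1}z)\,W^g(ds,dz).
\]
By definition $W^g(\phi)=W(\phi^g)$ with $\phi^g(s,y)=\phi(s,gy)$. Taking $\phi(s,z)=F(s,g^{-1}z)$ gives $\phi^g=F$, so for deterministic elementary $F$ the identity is tautological. I would extend it in three steps:
\begin{enumerate}
\item to simple adapted integrands;
\item then to all $W$-integrable fields by the It\^o--Walsh isometry.
\end{enumerate}
The extension works because $\phi\mapsto\phi^g$ is an isometry of $\mathcal H_R$; this uses $R(gx,gy)=R(x,y)$ and $\mu$-invariance. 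For step 1 one also needs that adaptedness is preserved: $\sigma(W^g|_{[0,s]})=\sigma(W|_{[0,s]})$, since $g$ is invertible and acts only in space, so the filtrations of $W$ and $W^g$ coincide.

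Applying the identity with $F(s,y)=p_{t-s}(g^{-1}x,y)f(u(s,y))$, we have
\[
F(s,g^{-1}z)=p_{t-s}(x,z)f(u^g(s,z)),
\]
so the stochastic term becomes $\beta\int_0^t\int_M p_{t-s}(x,z)f(u^g(s,z))W^g(ds,dz)$. Together with the deterministic term, this is exactly the mild equation for $u^g$ driven by $W^g$ with initial datum $g\cdot u_0$. The other conditions in Definition~\ref{def of Ito sol} are routine:
\begin{itemize}
\item adaptedness and measurability are preserved because the filtrations agree and $x\mapsto g^{-1}x$ is measurable;
\item second moments and $L^2$-continuity carry over, the latter provided the action is continuous. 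I would state this as needed, or note that $L^2$-continuity follows anyway from the mild form.
\end{itemize}

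\textbf{Law statement.} $W^g$ is an isonormal process over $\mathcal H_R$, since $\EE[W^g(\phi)W^g(\psi)]=\langle\phi^g,\psi^g\rangle_R=\langle\phi,\psi\rangle_R$. Hence $W^g\stackrel{\mathrm{law}}{=}W$. The unique solution is a measurable functional $\Phi(u_0,W)$, for instance as the $L^2$ limit of Picard iterates, each of which is a functional of the noise. Therefore $u^g=\Phi(g\cdot u_0,W^g)\stackrel{\mathrm{law}}{=}\Phi(g\cdot u_0,W)$.

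The main obstacle I expect is the rigorous change of variables in the Walsh integral for adapted random integrands, together with making the ``solution is a functional of the noise'' step precise. The cleanest route is to prove the identity for Picard iterates inductively, where each iterate is explicitly built from stochastic integrals, and then pass to the limit in $L^2(\Omega)$.
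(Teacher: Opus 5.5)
Your proposal is correct and follows essentially the same route as the paper. You use the substitution $y=g^{-1}z$ with $p_t(g^{-1}x,g^{-1}z)=p_t(x,z)$ for the semigroup term, the identity $W(\widetilde\Phi^g)=W^g(\widetilde\Phi)$ with $\widetilde\Phi(s,z)=\mathbf{1}_{[0,t]}(s)\,p_{t-s}(x,z)\,f(u^g(s,z))$ for the stochastic term, and $W^g\stackrel{\mathrm{law}}{=}W$ (from $\mathcal{G}$-invariance of $R$ and $\mu$) for the law statement. The paper applies $W^g(\phi)=W(\phi^g)$ directly to the random integrand and asserts the law identity without justification, so your extension via simple adapted integrands (noting that $W$ and $W^g$ generate the same filtration), your appeal to uniqueness, and your Picard-iterate argument add rigor rather than depart from its proof.
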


\begin{proof}
We first show that the assumption on $R$ gives $\mc{G}-$invariance of noise. Indeed, for test functions $\phi,\psi\in C_c^\infty(\RR_+\times M)$,

\begin{align*}
    &\EE[W^g(\phi)W^g(\psi)]=\EE[W(\phi^g)W(\psi^g)]\\
    =&\int_0^\infty\iint_{M^2} \phi(t,gx)\psi(t,gy)\,R(x,y)\,dx\,dy\,dt\\
    =&\int_0^\infty\iint_{M^2}  \phi(t,x)\psi(t,y)\,R(x,y)\,dx\,dy\,dt,
\end{align*}
where we used the change of variables $x\mapsto g^{-1}x$, $y\mapsto g^{-1}y$ and $\mc{G}-$invariance of $R$ and $dxdy$. Hence $W^g\stackrel{\mathrm{law}}{=}W$.

Fix $g\in \mathcal{G}$ and set $u^g(t,x):=u(t,g^{-1}x)$.
Using \(\mc{G}\)-invariance of the heat kernel and $\mu$,
\[
(P_t u_0)(g^{-1}x) = \int_{M} p_t(g^{-1}x,y)\,u_0(y)\,dy
= \int_{M} p_t(x,gy')\,u_0(gy')\,dy' 
= (P_t(g\cdot u_0))(x),
\]
where we changed variables $y=gy'$.

For the stochastic integral term, define the integrand
\[
\Phi_{t,x}(s,y):=\mathbf 1_{[0,t]}(s)\,p_{t-s}(g^{-1}x,y)\,f(u(s,y)),
\]
so that
\[
\int_0^t\int_{M} p_{t-s}(g^{-1}x,y)\,u(s,y)\,W(ds,dy)= W(\Phi_{t,x}).
\]
Now define
\[
\widetilde \Phi_{t,x}(s,y'):=\mathbf 1_{[0,t]}(s)\,p_{t-s}(x,y')\,f(u^g(s,y'))
=\mathbf 1_{[0,t]}(s)\,p_{t-s}(x,y')\,f(u(s,g^{-1}y')).
\]
Using $p_{t-s}(g^{-1}x,y)=p_{t-s}(x,gy)$, we have
\[
\Phi_{t,x}(s,y)=\widetilde \Phi_{t,x}(s,gy),
\]
i.e. $\Phi_{t,x} = (\widetilde \Phi_{t,x})^g$ with the convention
$\psi^g(s,y):=\psi(s,gy)$.
Therefore, by definition of the transformed noise $W^g$,
\[
W(\Phi_{t,x}) = W\big((\widetilde \Phi_{t,x})^g\big) = W^g(\widetilde \Phi_{t,x})
= \int_0^t\int_{M} p_{t-s}(x,y')\,f(u^g(s,y'))\,W^g(ds,dy').
\]
Combining the transformed deterministic and stochastic terms yields
\[
u^g(t,x)
=(P_t(g\cdot u_0))(x)
+\beta\int_0^t\int_{M} p_{t-s}(x,y)\,f(u^g(s,y))\,W^g(ds,dy),
\]
which is exactly \eqref{eq:SHE-mild} with initial condition $g\cdot u_0$ driven by $W^g$.\end{proof} 

\begin{corollary}[Equivariance of invariant fields and invariant measures]
\label{cor:isom-equiv-invariant}
Assume the hypotheses of Proposition~\ref{prop:isom-equiv-mild}. If $Z$ is a stationary solution of \eqref{eq:SHE}, then $g\cdot Z$ is also a stationary solution for every $g\in \mc{G}$.
\end{corollary}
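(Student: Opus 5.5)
The plan is to apply Proposition~\ref{prop:isom-equiv-mild} on the whole time line $\RR$, and then verify the two conditions of Definition~\ref{def: stationary sol and inv meas} for $g\cdot Z$ driven by the transformed noise $W^g$. First I extend the transformed noise to two-sided time: $W^g(\phi):=W(\phi^g)$ for $\phi$ supported in $\RR\times M$. The covariance computation in the proof of Proposition~\ref{prop:isom-equiv-mild} goes through verbatim with $\int_0^\infty$ replaced by $\int_\RR$, because the action only touches the spatial variable. Hence $W^g\stackrel{\mathrm{law}}{=}W$ as two-sided noises. Moreover, $\phi\mapsto\phi^g$ preserves time supports, so $\sigma(W^g((-\infty,T]\times M))=\mc{F}_T$, and the filtration is unchanged. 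In particular, $g\cdot Z$ is adapted, jointly measurable, and $L^2$-continuous whenever $Z$ is.

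\textbf{Condition 1.} Fix $t<t'$ and set $Z^g(s,x):=Z(s,g^{-1}x)$. Starting from the mild identity for $Z$ at the point $g^{-1}x$, I would repeat the two computations in the proof of Proposition~\ref{prop:isom-equiv-mild}, with the time interval $[0,t]$ replaced by $[t,t']$.
\begin{itemize}
\item For the deterministic term, $P_{t'-t}Z(t,\cdot)(g^{-1}x)=P_{t'-t}(g\cdot Z(t,\cdot))(x)$ by invariance of $p$ and $\mu$. This holds pathwise, since it is simply a change of variables $y=gy'$ in the integral.
\item For the stochastic term, set $\Phi(s,y)=\1_{[t,t']}(s)\,p_{t'-s}(g^{-1}x,y)\,f(Z(s,y))$. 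Then $\Phi=\widetilde\Phi^g$ with $\widetilde\Phi(s,y')=\1_{[t,t']}(s)\,p_{t'-s}(x,y')\,f(Z^g(s,y'))$.
\end{itemize}
The stochastic step is the only delicate one. The identity $W(\widetilde\Phi^g)=W^g(\widetilde\Phi)$ is immediate for deterministic integrands. For adapted random integrands I would justify it by approximating with simple adapted fields, for which it holds by definition. Both sides are then $L^2$ limits, by the It\^o--Walsh isometry and the isometry of $\phi\mapsto\phi^g$ in $\mathcal H_R$, which in turn follows from the $\mc{G}$-invariance of $R$. This yields
\[Z^g(t',x)=P_{t'-t}Z^g(t,\cdot)(x)+\beta\int_t^{t'}\int_M p_{t'-s}(x,y)f(Z^g(s,y))W^g(ds,dy).\]
So $Z^g$ satisfies Condition 1 driven by $W^g$.

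\textbf{Condition 2.} The finite-dimensional distributions of $Z^g(t,\cdot)$ at points $x_1,\dots,x_n$ are those of $Z(t,\cdot)$ at $g^{-1}x_1,\dots,g^{-1}x_n$. These do not depend on $t$ by stationarity of $Z$, so $Z^g(t,\cdot)\stackrel{\mathrm{law}}{=}Z^g(t',\cdot)$.

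\textbf{Main obstacle.} The hard part is interpretational: $g\cdot Z$ is a stationary solution driven by $W^g$ rather than by $W$. I would resolve this the same way Proposition~\ref{prop:isom-equiv-mild} does. Since $W^g\stackrel{\mathrm{law}}{=}W$, the joint law of $(g\cdot Z,W^g)$ satisfies the same defining relations as a stationary solution driven by $W$. Therefore the law of $g\cdot Z$, and in particular its time-$t$ marginal, is again an invariant measure. If one insists on a genuine stationary solution driven by the original $W$, I would transfer it through a measurable functional: by Theorem~\ref{thm:main} the relevant stationary solutions are $L^2$ limits of $u^h(t,\cdot)$, which are functionals of the noise. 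Under the second-moment bound, this also identifies $g\cdot Z^h$ with $Z^{g\cdot h}$, noting that $g\cdot h$ is bounded harmonic because the action commutes with $P_t$.
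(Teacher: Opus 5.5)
Your proposal is correct and is exactly the argument the paper leaves implicit, since it treats the corollary as an immediate consequence of Proposition~\ref{prop:isom-equiv-mild}. You redo the Proposition's change of variables on each window $[t,t']$ with random initial data, use that $W^g\stackrel{\mathrm{law}}{=}W$ and that $W^g$ generates the same filtration as $W$, and observe that relabelling points by $g^{-1}$ preserves time-invariance of the spatial laws. You are right to read the statement as ``stationary solution driven by $W^g$'', hence an invariant measure in law; the one quibble is that your optional transfer back to $W$ need not go through Theorem~\ref{thm:main} and its weak-disorder hypotheses, because the underlying $\sigma$-algebra is $\sigma(W)$ and you can simply write $g\cdot Z$ as a measurable functional of $W^g$ and evaluate that functional at $W$.
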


In particular, a \eqref{eq:SHE}-invariant measure with constant mean is fully invariant under the action of $\mathcal{G}$. For general $h \in \mathrm{Har}_b(M)$, the associated \eqref{eq:SHE}-invariant measures transform equivariantly under $\mathcal{G}$ via the natural action \((g \cdot h)(x) = h(g^{-1}x),\) which is also harmonic. A similar result holds for the fluctuation Gaussian fields $G_h$ of Theorem \ref{thm:fluc}.

\begin{remark}
    The proofs of Theorems \ref{thm:main}--\ref{thm:IC determine long time} and the preceding discussion sees no major role played by the metric $d$, which suggests that up to adding some assumptions, analogous results can be obtained on measure spaces (where one discusses Poisson boundary instead of Martin boundary). Since the most abstract formulation of Dalang--Walsh solutions to SPDE so far has been on metric measure spaces with Dirichlet forms\cite{BCHOTW}, we refrain from pursuing this generality for simplicity.
\end{remark}

\subsection{Acknowledgements}
The author thanks Le Chen and Alexander Dunlap for helpful discussions which motivated this work, Fabrice Baudoin for pointing out an error in a previous version, Robert Neel for discussions on the Dirichlet problem at infinity on negatively curved manifolds, and the anonymous reviewers for their careful reading, corrections, and suggestions for improvement of exposition.. This work was supported by a research grant (VIL73729) from Villum Fonden.

\section{Proof of Theorem \ref{thm:main}}\label{sec:pf main}

The proof of item (i) follows from repeatedly applying It\^o-Walsh isometry into the mild formulation \eqref{eq:SHE-mild} along with $\EE[f(u(s,y))f(u(s,y'))]\leq L_f^2\sup_{y\in M}\EE[u(s,y)^2]$ to obtain a Neumann-type series, and allowing time to go to infinity, yielding \[\sup_{t\ge 0,x\in M}\EE[u(t,x)^2]\leq \norm{u_0}_\infty \sum_{k\ge 0}((\beta L_f)^2\Lambda)^k.\] The RHS geometric series converges iff \((\beta L_f)^2 \Lambda<1\).\\

For item (ii), we first show that the mean of an invariant field must be harmonic. It is simple: taking expectation of \eqref{eq:SHE-mild} gives us $$\EE[u(t,x)]=\int_{M}p_t(x,y)u_0(y)dy=P_tu_0(x).$$
If a field is invariant, then its first moment must also be invariant, and the only functions invariant under the heat semigroup are precisely the harmonic functions. This means that any invariant measure must be associated to some harmonic function $h$, but no two harmonic functions can be associated with the same invariant measure.

We are thus left to prove two steps to finish the proof of item (ii) and therefore the entire theorem: an existence step, for which we adapt the method of \cite{GL20} to solutions of \eqref{eq:SHE} starting from bounded harmonic functions, and a uniqueness step.

\subsection{Existence by Backward Time Shifts}
\noindent This version of the argument first showcased in \cite{GL20} is inspired by \cite[Theorem 2.10]{GHL25}.

\begin{theorem}[Pullback construction from harmonic initial condition]
\label{thm:pullback_harmonic}
Suppose $(\beta L_f)^2\Lambda<1$.
For $h\in \mathrm{Har}_b(M)$ and $K>0$, we define $u^{K,h}(t,x)$ as the solution $K-$backward shifted equation starting from $h$, that is, \[\label{def:shift PAM}u^{K,h}(t,x)=h(x)+\beta\int_{-K}^t\int_{M}p_{t-s}(x,y)f(u^{K,h}(s,y))W(ds,dy).\]

Then:

\begin{enumerate}[label=(\roman*)]
\item For every fixed $t\in\RR$ and $x\in M$, the family $\{u^{K,h}(t,x)\}_{K>-t}$ is Cauchy in $L^2(\Omega)$. Hence there exists a limit
\[
Z^{h}(t,x)
:= \lim_{K\to\infty} u^{K,h}(t,x)
\quad \text{in } L^2(\Omega).
\]

\item The random field $Z^{h}(t,x)$ 
is a stationary solution of \eqref{eq:SHE}.

\item For all $x\in M$,
\[
\EE[Z^{h}(t,x)] = h(x).
\]
\end{enumerate}
\end{theorem}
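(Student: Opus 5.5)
The plan is to compare $u^{K,h}$ and $u^{K',h}$ for $K'>K>-t$ directly through their mild formulations, and to use that $h$ is harmonic, so $P_s h=h$ for all $s\ge0$. For $t>-K$, write $u^{K',h}(t,x)$ as the solution started at time $-K$ from the random initial condition $u^{K',h}(-K,\cdot)$. Then
\[
u^{K',h}(t,x)-u^{K,h}(t,x)=P_{t+K}\big(u^{K',h}(-K,\cdot)-h\big)(x)+\beta\int_{-K}^t\int_M p_{t-s}(x,y)\big(f(u^{K',h})-f(u^{K,h})\big)(s,y)\,W(ds,dy).
\]
Set $D(s):=\sup_y\EE[(u^{K',h}(s,y)-u^{K,h}(s,y))^2]$. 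By Itô--Walsh isometry, the Lipschitz bound $|f(a)-f(b)|\le L_f|a-b|$ and Cauchy--Schwarz on $\EE[\Delta(s,y)\Delta(s,y')]$, the stochastic term has second moment at most $(\beta L_f)^2\int_{-K}^t D(s)\iint p_{t-s}p_{t-s}|R|\,ds$. Bounding $D(s)$ by its supremum over $s\in[-K,t]$ gives at most $(\beta L_f)^2\Lambda\sup_{s\in[-K,t]}D(s)$. The difference is adapted and independent of the noise after $-K$, so cross terms vanish, and the same estimate holds at every intermediate time. With $\theta:=(\beta L_f)^2\Lambda<1$, this yields
\[
\sup_{s\in[-K,t]}D(s)\le \frac{1}{1-\theta}\,\sup_{s\in[-K,t],\,x}\EE\Big[\big(P_{s+K}(u^{K',h}(-K,\cdot)-h)(x)\big)^2\Big].
\]
The main obstacle is to show that this initial-layer term tends to $0$ as $K\to\infty$, uniformly in $K'>K$.

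For that step, note that $g:=u^{K',h}(-K,\cdot)-h=\beta\int_{-K'}^{-K}\int_M p_{-K-r}(\cdot,y)f(u^{K',h}(r,y))\,W(dr,dy)$ is a stochastic integral. Using the semigroup property, $P_{s+K}g(x)=\beta\int_{-K'}^{-K}\int p_{s-r}(x,y)f(u^{K',h}(r,y))\,W(dr,dy)$. Its second moment is bounded by
\[
\beta^2L_f^2\,C_h\int_{-K'}^{-K}\iint p_{s-r}(x,y)p_{s-r}(x,y')|R(y,y')|\,dy\,dy'\,dr\le \beta^2L_f^2C_h\int_{s+K}^{\infty}\sup_{x,x'}\iint p_\tau(x,y)p_\tau(x',y')|R|\,d\tau,
\]
where $C_h$ is the uniform second-moment bound from item (i) of Theorem~\ref{thm:main}; it applies here because the equation is time-homogeneous and $h\in L^\infty$. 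The delicate point is that Assumption~\ref{assump:WD} puts the supremum outside the time integral. I would instead bound the tail for fixed $x$ (and $x'=x$), where integrability gives $\int_{s+K}^\infty\to0$ as $K\to\infty$. This suffices for Cauchy-ness at a fixed $(t,x)$ if the iteration is run pointwise. Concretely, I would expand the Picard-type recursion as a Neumann series: each term is an iterated kernel integral ending in a tail integral, and dominated convergence, with the geometric bound $\theta^k$ as dominating sequence, sends each term to $0$. If uniformity is really needed, I would replace the supremum by the tail $\Lambda_T:=\sup_{x,x'}\int_T^\infty(\cdots)$ and assume it tends to $0$, which holds in all the examples of Section~\ref{sec:martin bd}.

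For (ii), I would pass to the limit $K\to\infty$ in the mild equation between times $t<t'$. This uses the flow property $u^{K,h}(t',x)=P_{t'-t}u^{K,h}(t,\cdot)(x)+\beta\int_t^{t'}\int_M p_{t'-s}(x,y)f(u^{K,h}(s,y))\,W(ds,dy)$. The stochastic term converges in $L^2$ by isometry and the Lipschitz bound, since the convergence is uniform in $y$ by the estimates above. The deterministic term converges in $L^2(\Omega)$ by Minkowski's integral inequality, because the convergence holds with a uniform bound in $y$. Stationarity in law follows from time-shift invariance of the noise: the law of $u^{K,h}(t+a,\cdot)$ equals that of $u^{K+a,h}(t,\cdot)$, and the limits agree. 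For (iii), take expectations: the stochastic integral has mean zero, so $\EE[u^{K,h}(t,x)]=h(x)$, and this passes to the $L^2$ limit.
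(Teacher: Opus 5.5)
Your decomposition is the paper's. By the semigroup property, $P_{t+K}\bigl(u^{K',h}(-K,\cdot)-h\bigr)(x)$ is exactly its $\beta I(-K',-K)$, and your treatment of (iii) and of stationarity in law is also the same. The routes differ in how (i) is closed. The paper uses a single sup-norm contraction: it bounds $A(K,K')=\sup_{t>-K,\,x}\EE[\delta^{K,K'}(t,x)^2]$ by $(\beta L_f)^2\Lambda\,A(K,K')$ plus a tail term.

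Your first display is that same step, and you should discard it rather than try to repair it. The supremum over $s\in[-K,t]$ contains $s=-K$, where your initial-layer term equals $\EE[(u^{K',h}(-K,x)-h(x))^2]$. This does not tend to $0$ uniformly in $K'>K$: for $f(u)=u$, $h\equiv1$, $R\ge0$ it is at least $\beta^2\int_0^{K'-K}\iint p_\tau(x,y)p_\tau(x,y')R(y,y')\,dy\,dy'\,d\tau$. So the real obstruction is the supremum in time, not where $\sup_x$ sits in Assumption~\ref{assump:WD}, and your fallback $\Lambda_T\to0$ would not rescue that display. The paper's closing step has the same defect: its $F(K)$ depends on $t$ and is no longer small once the supremum over $t\downarrow-K$ is taken.

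What does prove (i), under Assumption~\ref{assump:WD} exactly as stated, is your pointwise Neumann series, so make it the argument. Set $\theta=(\beta L_f)^2\Lambda$, $\phi(s,x)=\EE[\delta^{K,K'}(s,x)^2]$ and $k_x(\tau)=\iint p_\tau(x,y)p_\tau(x,y')|R(y,y')|\,dy\,dy'$. Linearize via $\sqrt{\phi(r,y)\phi(r,y')}\le\frac12(\phi(r,y)+\phi(r,y'))$ to get $\phi\le\bar\tau_K+\mathcal{T}_K\phi$ on $[-K,t]\times M$. Here $\bar\tau_K(s,x)=\beta^2L_f^2C_h\int_{s+K}^\infty k_x(\tau)\,d\tau$ does not depend on $K'$, and $\mathcal{T}_K$ is positive with $\|\mathcal{T}_K\psi\|_\infty\le\theta\|\psi\|_\infty$. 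Since $\phi\le4C_h$, iterating gives $\phi(t,x)\le\sum_{n\ge0}(\mathcal{T}_K^n\bar\tau_K)(t,x)$.

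In backward-time variables, the $n$-th term integrates $\bar\tau_K$ against a $K$-independent positive measure of mass at most $\theta^n$, restricted to total backward time $\rho\le t+K$. At a fixed endpoint $(\rho,z)$ you have $\bar\tau_K(t-\rho,z)=\beta^2L_f^2C_h\int_{t+K-\rho}^\infty k_z(\tau)\,d\tau\to0$, with the uniform bound $\beta^2L_f^2C_h\Lambda$. So dominated convergence sends each term to $0$, and $\theta^n$ dominates the series. This is exactly how the order-one initial layer gets damped: it is reached only through chains whose backward length is within $O(1)$ of $t+K$, and these carry vanishing mass.

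Compared with the paper's closing step, this gives a valid proof with no uniform tail hypothesis, at the cost of convergence only pointwise in $(t,x)$. A sup-norm version would need a Volterra/renewal argument in time, as in Section~\ref{sec:long time}, plus uniform tail decay. Accordingly, in (ii) do not claim convergence uniform in $y$ for the stochastic term. Dominated convergence against the finite measure $p_{t'-s}(x,y)p_{t'-s}(x,y')|R(y,y')|\,dy\,dy'\,ds$ on $[t,t']$, with the bound $C_h$, gives what you need.
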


\begin{proof}
For item (i), we first fix $t\in\RR$ and let $K'>K>-t$, and for them define the difference
\[
\delta^{K,K'}(t,x)
:= u^{K',h}(t,x) - u^{K,h}(t,x).
\]
We also define \[A(K,K'):= \sup_{-K'<-K<t,x\in M}\EE[\delta^{K,K'}(t,x)^2].\] 
By the mild formulation \eqref{eq:SHE-mild}, we have \begin{align*}
    \delta^{K,K'}(t,x)=&\beta I(-K,t)+\beta I(-K',-K),\text{ where }\\
    I(-K,t):=&\int_{-K}^t\int_M p_{t-s}(x,y)[f(u^{K',h}(s,y))-f(u^{K,h}(s,y))] W(ds,dy),\\
    I(-K',-K):=&\int_{-K'}^{-K}\int_M p_{t-s}(x,y)f(u^{K',h}(s,y)) W(ds,dy).
\end{align*}
We will now bound the second moments of these stochastic integrals. By It\^o--Walsh isometry, \begin{align*}
    &\EE[I(-K,t)^2]\\
    =&\int_{-K}^t\iint_{M^2} p_{t-s}(x,y)p_{t-s}(x,y')R(y,y')\\
    &\times \EE[\{f(u^{-K',h}(s,y))-f(u^{-K,h}(s,y))\}\{f(u^{-K'}(s,y'))-f(u^{-K}(s,y'))\}] dydy'ds\\
    \leq& \int_{-K}^t\iint_{M^2} p_{t-s}(x,y)p_{t-s}(x,y')R(y,y')dydy'ds\\
    &\times \sup_{s\in [-K,t]}\sup_{y\in M}\EE[\{f(u^{-K',h}(s,y))-f(u^{-K,h}(s,y))\}^2]\\
    \leq& \Lambda L_f^2 A(K,K').
\end{align*}
In the last step, we used Assumption \ref{assump:WD} and $f$ being Lipschitz with $f(0)=0$.
Similarly, \begin{align*}
    &\EE[I(-K',-K)^2]\\
    =&\int_{-K'}^{-K}\iint_{M^2} p_{t-s}(x,y)p_{t-s}(x,y')R(y,y')\EE[f(u^{K',h}(s,y))f(u^{K',h}(s,y'))] dydy'ds\\
    \leq& L_f^2\sup_{s>-K',y\in M}\EE[u^{-K',h}(s,y)^2]\int_{-K'}^{-K}\iint_{M^2} p_{t-s}(x,y)p_{t-s}(x,y')R(y,y') dydy'ds\\
    =& L_f^2\sup_{s>-K',y\in M}\EE[u^{-K',h}(s,y)^2]\left[\int_{-\infty}^{-K}-\int_{-\infty}^{-K'}\right]\iint_{M^2} p_{t-s}(x,y)p_{t-s}(x,y')R(y,y') dydy'ds\\
    \leq& L_f^2\sup_{s>-K',y\in M}\EE[u^{-K',h}(s,y)^2] (F(K)-F(K')),
\end{align*}
where for $-K<t$, \[F(K):=\sup_{x\in M}\int_{-\infty}^{-K}\iint_{M^2} p_{t-s}(x,y)p_{t-s}(x,y')|R(y,y')| dydy'ds.\] By Assumption \ref{assump:WD}, $F(K)$ is Cauchy in $K$ as $K\uparrow\infty$ for $-K<t$. 
Combining, we have \[\EE[\delta^{K,K'}(t,x)^2]\leq (\beta L_f)^2\Lambda A(K,K') +(\beta L_f)^2\sup_{s>-K',y\in M}\EE[u^{-K,h}(s,y)^2] (F(K)-F(K')).\]
Taking sup over $t,x$ in LHS and noting that \(\sup_{s>-K',y\in M}\EE[u^{-K,h}(s,y)^2]\leq C\) by Assumption \ref{assump:WD} gives \[(1-(\beta L_f)^2\Lambda)A(K,K')\leq (\beta L_f)^2C[F(K)-F(K')]\iff A(K,K')\leq\frac{(\beta L_f)^2C[F(K)-F(K')]}{1-(\beta L_f)^2\Lambda}.\]
Since $F(K)$ is Cauchy in large $K$ by Assumption \ref{assump:WD}, this finishes the proof of item (i). Item (ii) follows from the noise having the same distribution on time intervals of the same size followed by a change of variables in time.\\
For item (iii), taking expectations in the mild formulation gives
\[
\EE[u^{K,h}(t,x)]
=
P_{t+K}h(x)=h(x).
\]
Passing to the $K\uparrow\infty$ limit yields item (iii). This completes the proof.
\end{proof}

\subsection{Uniqueness by Backward Contraction}
In Theorem \ref{thm:pullback_harmonic}, we obtained for every $h\in\mathrm{Har}_b(M)$ a stationary solution of \eqref{eq:SHE} with mean $h$. The construction procedure also proves the desired convergence result in the statement of Theorem \ref{thm:main} by an appropriate change of variables. We will now show this stationary solution is unique for each $h\in \mathrm{Har}_b(M)$, completing the proof of Theorem \ref{thm:main}.
\begin{theorem}[Uniqueness of stationary solutions in the weak-disorder class]
    Assume $(\beta L_f)^2\Lambda<1$ and let $V^1,V^2$ be two stationary solutions of \eqref{eq:SHE} driven by the same noise $W$ such that for all $t\in \RR,x\in M$, we have \(\EE[V^1(t,x)]=\EE[V^2(t,x)]\) and \[\sup_{t\in\RR}\sup_{x\in M}\EE[|V^i(t,x)|^2]<\infty,\quad i\in\set{1,2}.\]
    Then $V^1\equiv V^2$ as random fields.
\end{theorem}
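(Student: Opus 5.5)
The plan is to run a backward-in-time contraction on the difference $D(t,x):=V^1(t,x)-V^2(t,x)$, in the same spirit as the Cauchy estimate in Theorem \ref{thm:pullback_harmonic}. Fix $t\in\RR$ and a starting time $t_0<t$. By property 1 of Definition \ref{def: stationary sol and inv meas}, both $V^i(t,\cdot)$ are mild solutions started at time $t_0$ from $V^i(t_0,\cdot)$. Subtracting gives
\[
D(t,x)=P_{t-t_0}D(t_0,\cdot)(x)+\beta\int_{t_0}^t\int_M p_{t-s}(x,y)\bigl[f(V^1(s,y))-f(V^2(s,y))\bigr]W(ds,dy).
\]
Set $B:=\sup_{s\in\RR,x\in M}\EE[D(s,x)^2]$. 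This is finite by the assumed uniform second moment bounds.

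\textbf{Stochastic term.} By the It\^o--Walsh isometry and Cauchy--Schwarz, $|\EE[(f(V^1)-f(V^2))(s,y)\,(f(V^1)-f(V^2))(s,y')]|\le L_f^2B$. Assumption \ref{assump:WD} then bounds the second moment of the stochastic integral by $(\beta L_f)^2\Lambda B$, uniformly in $t_0$, $t$ and $x$. This is exactly the estimate used for $I(-K,t)$ in the existence proof.

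\textbf{Deterministic term (the main obstacle).} We must show that $\EE[(P_{t-t_0}D(t_0,\cdot)(x))^2]\to0$ as $t_0\to-\infty$, or at least that it stays strictly below $(1-(\beta L_f)^2\Lambda)B$ along some sequence. We have $\EE[D(t_0,\cdot)]=0$, since the means agree, but $P_{t-t_0}$ does not obviously kill the variance of a mean-zero field. The approach is to expand $D(t_0,y)$ itself via its own mild formulation from some earlier time $t_1<t_0$, and iterate. At each step the new stochastic integral lies in the noise window $(t_1,t_0]$. Composing with $P_{t-t_0}$ and using the semigroup property yields an integral against $p_{t-s}$ over $s\in(t_1,t_0]$. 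Its second moment is bounded by $L_f^2\,C\sup_x\int_{-\infty}^{t_0}\iint p_{t-s}(x,y)p_{t-s}(x,y')|R(y,y')|\,dy\,dy'\,ds$, which is the tail $F$ from Theorem \ref{thm:pullback_harmonic}. That tail tends to $0$ as $t_0\to-\infty$ by Assumption \ref{assump:WD}.

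Concretely, I would write $D(t,x)=P_{t-t_1}D(t_1)(x)+\beta\int_{t_1}^{t}\cdots$ and split the integral at $t_0$. The piece over $[t_0,t]$ is controlled as in the stochastic-term step. The piece over $[t_1,t_0]$ is controlled by the tail of Assumption \ref{assump:WD}. The remaining term $P_{t-t_1}D(t_1)$ is the problematic one. To handle it, I would take the limit $t_1\to-\infty$ first, using stationarity to identify $V^i(t,\cdot)$ as the $L^2(\Omega)$ limit of solutions pulled back from time $t_1$. For fixed $x$, the map $\phi\mapsto P_{t-t_1}\phi(x)$ is integration against a probability kernel. Since $\EE D(t_1,\cdot)=0$, the quantity $\EE[(P_{t-t_1}D(t_1)(x))^2]=\iint p\,p\,\mathrm{Cov}(D(t_1,y),D(t_1,y'))$ can be rewritten through the mild formula for $D(t_1)$ as an integral of $p_{t-s}\,p_{t-s}\,R$ over $s<t_1$, plus a further $P$-term. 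That integral is again bounded by $C\,F$, and $F\to0$ as $t_1\to-\infty$. Iterating this decomposition gives a recursive bound whose leftover term is a sum of vanishing tails. This recursion is the delicate step; making the leftover term genuinely vanish is where I expect the real difficulty to lie.

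\textbf{Conclusion.} Granting that the deterministic term vanishes in the limit, we get
\[
\EE[D(t,x)^2]\le(\beta L_f)^2\Lambda B+o(1).
\]
Taking the supremum over $t$ and $x$ gives $B\le(\beta L_f)^2\Lambda B$. Since $(\beta L_f)^2\Lambda<1$, this forces $B=0$, so $V^1\equiv V^2$.
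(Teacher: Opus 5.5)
\textbf{There is a genuine gap, at exactly the step you grant:} that $P_{t-t_0}D(t_0,\cdot)(x)\to0$ in $L^2(\Omega)$ as $t_0\to-\infty$. Your recursion cannot deliver it. Subtracting the mild formulas for $D(t,x)$ started at $t_1$ and at $t_2<t_1$ gives
\[
P_{t-t_1}D(t_1,\cdot)(x)=P_{t-t_2}D(t_2,\cdot)(x)+\beta\int_{t_2}^{t_1}\int_M p_{t-s}(x,y)\bigl[f(V^1(s,y))-f(V^2(s,y))\bigr]W(ds,dy).
\]
So every iteration reproduces a leftover of exactly the same form. The tails of Assumption~\ref{assump:WD} control only the increments, never the leftover itself.

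In fact no argument built only from the mild equation, the isometry, zero mean and uniform second moments can work. Take $f\equiv0$ (allowed, with $L_f=0$), $V^1=h$ and $V^2=h+\xi$, where $\xi$ is a nonconstant centred square-integrable random variable, e.g.\ $\xi=W(\phi)$ with $\phi$ supported in positive times. All of those properties hold, yet $P_{t-t_0}D(t_0,\cdot)=\xi$ for every $t_0$. What excludes this example is two facts your proposal never invokes: stationary solutions are adapted to $\mathcal{F}_t=\sigma(W((-\infty,t]\times M))\vee\mathcal{N}$, and the remote past $\mathcal{F}_{-\infty}:=\bigcap_s\mathcal{F}_s$ is trivial. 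Your alternative idea of identifying $V^i$ as a pullback limit is circular, since that identification is the uniqueness claim itself.

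\textbf{The missing idea, which is the paper's,} is a reverse-martingale argument. Because $D(t_0,\cdot)$ is $\mathcal{F}_{t_0}$-measurable and the Walsh integral over $(t_0,t]$ has zero conditional mean, $P_{t-t_0}D(t_0,\cdot)(x)=\EE[D(t,x)\mid\mathcal{F}_{t_0}]$. This is a reverse martingale in $t_0$, so it converges in $L^2(\Omega)$ to $\EE[D(t,x)\mid\mathcal{F}_{-\infty}]$. White-in-time noise has independent increments on disjoint time intervals, so Kolmogorov's $0$--$1$ law makes $\mathcal{F}_{-\infty}$ trivial, and the limit equals $\EE[D(t,x)]=0$.

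Your recursion can also be completed with the same input. The displayed identity shows that, for fixed $x$, $\{P_{t-s}D(s,\cdot)(x)\}_s$ is Cauchy in $L^2(\Omega)$ as $s\to-\infty$. Indeed, the increment has second moment at most $\beta^2L_f^2B$ times the tail, beyond $t-t_1$, of the integral in Assumption~\ref{assump:WD} at the point $x$. The limit is $\mathcal{F}_{-\infty}$-measurable, hence a.s.\ constant, hence $0$ because every term has mean zero.

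With this step supplied, the rest of your argument is correct and matches the paper's contraction: the stochastic-term bound, the handling of the cross term, and $B\le(\beta L_f)^2\Lambda B$. Taking $B$ as a supremum over all times also spares you the paper's appeal to joint stationarity of $(V^1,V^2)$.
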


\begin{proof}
    Define $\delta(t,x):=V^1(t,x)-V^2(t,x)$. By definition, $\EE[\delta(t,\cdot)]\equiv0$. Since $V^1$ and $V^2$ are driven by the same noise, $(V^1,V^2)$ is stationary in time as a stochastic process on $\mathcal{M}\times\mathcal{M}$, where $\mathcal{M}$ is the space of functions on $M$.\\
    The theorem is proven if $\delta\equiv0$. By the back-shifted mild formulation of $V^i,i\in \set{1,2}$ at $0$ from $-T$, we have \[\label{eq:delta mild}\delta(0,x)=[P_{T}\delta(-T,\cdot)](x)+\beta\int_{-T}^0\int_M p_{-s}(x,y)[f(V^1(s,y))-f(V^2(s,y))]W(ds,dy).\]
    We will first show \begin{equation}\label{lim:heat smgp term 0}A_T(x):=P_{T}\delta(-T,\cdot),\qquad \lim_{T\uparrow\infty}A_T(x)=0.
    \end{equation}
    Since the Walsh integral over $(-T,0]$ has expectation 0 when conditioned on $\mathcal{F}_{-T}$, we have \[\EE[\delta(0,\cdot)|\mathcal{F}_{-T}]=P_{T}\delta(-T,\cdot)](x).\]
    Sending $T\uparrow\infty$ in the above and applying Kolmogorov's 0--1 law, we have \(\mc{F}_{-\infty}:=\cap_{T\geq 0}\mc{F}_{-T}=\{\emptyset,\Omega\}\). Since $(\mathcal F_{-T})_{T\ge0}$ is a decreasing filtration, the martingale convergence theorem backwards gives us \eqref{lim:heat smgp term 0}.\\
    Now let $A(x,y):=\EE[\delta(0,x)\delta(0,y)]$. If $A\equiv 0$, then \(\EE[(\delta(0,x)-\delta(0,y))^2]=0\) for all $x,y$, hence $\delta\equiv 0$ almost surely.
    The rest of this proof is thus dedicated to showing $A\equiv0$.\\
    By It\^o-Walsh isometry, we have \begin{align*}
        A(x,x')=&\EE[A_T(x)A_T(x')]\\
        &+\beta^2\int_{-T}^0\iint_{M^2} p_{-s}(x,y)p_{-s}(x,y')R(y,y')\\
        &\times\EE[\{f(V^1(y))-f(V^2(y))\}\{f(V^1(s,y'))-f(V^2(s,y'))\}]dydy'ds.\\
        \leq& E[A_T(x)A_T(x')]\\
        &+\beta^2\int_{-T}^0\iint_{M^2} p_{-s}(x,y)p_{-s}(x,y')|R(y,y')|\sup_{z\in M}\EE[|f(V^1(s,z))-f(V^2(s,z))|^2]dydy'ds.
    \end{align*}
    Since $f(0)=0$, we have $|f(x)-f(y)|\leq L_f|x-y|$, and stationarity of $(V^1,V^2)$ gives us \[\sup_{z\in M}\EE[|f(V^1(s,z))-f(V^2(s,z))|^2]=\sup_{z\in M}\EE[|f(V^1(0,z))-f(V^2(0,z))|^2]\leq L_f^2 \norm{A}_\infty,\] where \(\norm{A}_\infty:=\sup_{x,y\in M}|A(x,y)|\). This implies
    \begin{align*}
        A(x,x')\leq& E[A_T(x)A_T(x')]\\
        &+\beta^2\int_{-T}^0\iint_{M^2} p_{-s}(x,y)p_{-s}(x,y')|R(y,y')|L_f^2\norm{A}_{\infty}dydy'ds\\
        \leq& E[A_T(x)A_T(x')]+(\beta L_f)^2\Lambda\norm{A}_\infty.
    \end{align*}
    Taking $\sup_{x,y\in M}$ in LHS and sending $T\uparrow\infty$, we have by \eqref{lim:heat smgp term 0} \[\norm{A}_\infty\leq (\beta L_f)^2\Lambda\norm{A}_\infty,\]
    which forces $\norm{A}_\infty=0$ if $(\beta L_f)^2\Lambda<1$. This concludes the proof.
\end{proof}

\section{Proof of Theorem \ref{thm:fluc}}\label{sec:fluc}

By It\^o-Walsh Isometry, we have \[\label{eq:cov-Gh}\EE[G_h(x)G_h(x')]=\int_0^\infty\iint_{M^2}p_t(x,y)p_t(x',y')R(y,y')f(h(y))f(h(y'))dydy't.\]
Define two quantities \[\Delta_\beta(x):=Z^h_\beta(0,x)-h(x),\qquad M_\beta:=\sup_{x\in M}\EE[\Delta_\beta(x)^2].\]
We will show that 
\begin{enumerate}
    \item[{\rm (i)}] $M_\beta=O(\beta^2)$ as $\beta\downarrow0$,
    \item[{\rm (ii)}] \(\EE\big[\big|\frac{\Delta_\beta(x)}{\beta}-G_h(x)\big|^2\big]\leq CM_\beta\).
\end{enumerate}
Proving both of the above would conclude the proof.\\
Recall the backward mild formulation limit identity \[Z^h_\beta(0,x)=\lim_{T\uparrow\infty}u^{T,h}(0,x)=\lim_{T\uparrow\infty}h(x)+\beta\int_{-T}^0\int_M p_{-s}(x,y)f(u^{T,h}(s,y)) W(ds,dy).\]
Applying It\^o-Walsh isometry in the above identity, we obtain \begin{align*}
    \EE[\Delta_\beta(x)^2]=&\beta^2\int_{-\infty}^0\iint_{M^2} p_{-s}(x,y)p_{-s}(x,y')R(y,y')\EE[f(Z^h_\beta(s,y))f(Z^h_\beta(s,y'))]dydy'dt,\\
    \EE[f(Z^h_\beta(s,y))f(Z^h_\beta(s,y'))]\leq&L_f^2\sup_{x\in M}\EE[Z^h_\beta(x)^2]=L_f^2\sup_{x\in M}\EE[(\Delta_\beta(x)+h(x))^2]=L_f^2(M_\beta+\norm{h}^2_\infty),\text{ thus }\\
    \EE[\Delta_\beta(x)^2]\leq&\beta^2 L_f^2\Lambda M_\beta+\beta^2 L_f^2\Lambda\norm{h}^2_\infty.
\end{align*}
Taking sup in $x$ in LHS gives us \[M_\beta\leq \frac{(\beta L_f)^2\Lambda \norm{h}^2_\infty}{1-(\beta L_f)^2\Lambda}=O(\beta^2),\quad \beta\downarrow0.\]
This proves item (i). For item (ii), we have simply \begin{align*}
    &\EE\left[\left(\frac{\Delta_\beta}{\beta}-G_h(x)\right)^2\right]\\
    =& \int_{0}^\infty\iint_{M^2}p_{t}(x,y)p_t(x,y')R(y,y')\EE[(f(Z^h_\beta(s,y)-f(h(y)))(f(Z^h_\beta(s,y')-f(h(y')))]dydydt\\
    \leq& \Lambda \sup_{z\in M}\EE[(f(Z^h_\beta(s,z)-f(h(z)))^2]\leq L_f^2\Lambda M_\beta.
\end{align*} 

\begin{remark}
Even at first order in $\beta$, the limiting covariance \eqref{eq:cov-Gh} depends on the sector $h$ through the profile $f(h)$; in particular, distinct $h$ typically yield distinct Gaussian limits.
\end{remark}

\section{Proof of Theorem \ref{thm:IC determine long time}}\label{sec:long time}

Let $u^h(t,x)$ be the solution of \eqref{eq:SHE} starting from $h$. From the proof of Theorem \ref{thm:main}, the desired result holds for $u^h$. Thus, by the elementary observation \begin{align*}\EE[|u(t,x)-Z^h_\beta(x)|^2]^{\frac{1}{2}}=&\EE[|u(t,x)-u^h(t,x)+u^h(t,x)-Z^h_\beta(x)|^2]^{\frac{1}{2}}\\
\leq& \EE[|u(t,x)-u^h(t,x)|^2]^{\frac{1}{2}}+\EE[|u^h(t,x)-Z^h_\beta(x)|^2]^{\frac{1}{2}},
\end{align*}
we need only show \(\EE[|u(t,x)-u^h(t,x)|^2]^{\frac{1}{2}}\stackrel{t\uparrow\infty}{\longrightarrow}0.\)

Set $\delta(t,x) := u(t,x) - u^h(t,x)$. By \eqref{eq:SHE-mild},
\[
\delta(t,x)
=
(P_tu_0 - h)(x)
+
\beta \int_0^t \int_M p_{t-s}(x,y)\big(f(u(s,y)) - f(u^h(s,y))\big)\,W(ds,dy).
\]

Applying It\^o-Walsh isometry and using $f$ is Lipschitz, we obtain
\begin{align*}
    \EE[\delta(t,x)^2]\le&|(P_tu_0 - h)(x)|^2\\
    &+(\beta L_f)^2 \int_0^t \iint_{M^2} p_{t-s}(x,y)p_{t-s}(x,y') |R(y,y')| \EE[|\delta(s,y)\delta(s,y')|]\,dydy'\,ds.
\end{align*}

Denote by \(
M(t):=\sup_{x\in M} \EE[\delta(t,x)^2],\quad a(t):=\sup_{x\in M} |(P_tu_0 - h)(x)|^2,\) and \[K(r):=\sup_{x\in M}\iint_{M^2}p_r(x,y)p_r(x,y')|R(y,y')|dydy'.\]

Thus taking sup in $x$ and applying Cauchy-Schwarz in $L^2(\Omega)$, we have \begin{equation}\label{est:M(t)}
    M(t)\leq a(t)+(\beta L_f)^2\int_0^t K(t-s)M(s)ds=a(t)+(\beta L_f)^2\int_0^t K(s)M(t-s)ds.
\end{equation}

Since $a(t)\to 0$ by assumption and $(\beta L_f)^2 \Lambda < 1$ and $\int_0^tK(s)ds\uparrow \Lambda$ as $t\uparrow\infty$ by Assumption \ref{assump:WD}, we see after taking $\sup_{0\le s\le t}$ on both sides of the previous inequality that
\[
\sup_{0\le s\le t} M(s) \le \frac{\sup_{t\ge 0}a(t)}{1-(\beta L_f)^2 \Lambda}<\infty ,
\] where the finitude of the last numerator follows from the fact that $P_t$ is a Markov semigroup.
Thus, \(\overline{M}:=\limsup_{t\uparrow\infty}M(t)\) exists, and we will show \(\overline{M}=0\). Writing \[\int_0^t K(s)M(t-s)ds=\int_0^\infty \mathbb{1}_{s\leq t}K(s)M(t-s) ds=:\int_0^\infty F_t(s)ds,\]
we have by $M\in L^\infty(\RR_+)$ and Assumption \ref{assump:WD} that \(0\leq F_t(s)\leq K(s)\norm{M}_{\infty}\). In fact, for all $s\in \RR_+$, \[\limsup_{t\uparrow\infty}M(t-s)=\overline{M}\implies \limsup_{t\uparrow\infty}F_t(s)=K(s)\overline{M}.\]
Thus by the reverse Fatou lemma, we have \[\limsup_{t\uparrow\infty}\int_0^\infty F_t(s)ds=\int_0^\infty \limsup_{t\uparrow\infty}F_t(s)ds=\overline{M}\int_0^\infty K(s)ds=\Lambda \overline{M}.\]
We now take $\limsup$ in time on both sides of \eqref{est:M(t)}, which gives us \[\overline{M}\leq (\beta L_f)^2\Lambda\overline{M}\implies\overline{M}=0,\] where the implication of the desired conclusion uses the assumption $(\beta L_f)^2\Lambda<1$.

\begin{remark}
The assumption that $P_t u_0$ converges uniformly to a harmonic function $h$ is natural in geometrically non-compact settings. In particular, on spaces with a well-understood Martin boundary (e.g.\ hyperbolic spaces and more generally negatively curved manifolds), bounded functions which admit a continuous extension to the boundary have harmonic limits given by the corresponding Poisson/Martin representation. In such cases, the above proposition shows that the long-time behavior of the stochastic equation is completely determined by the boundary data of the initial condition.
\end{remark}

\printbibliography
\end{document}